\begin{document}

\newtheorem{theorem}{Theorem}[section]
\newtheorem{corollary}[theorem]{Corollary}
\newtheorem{definition}[theorem]{Definition}
\newtheorem{conjecture}[theorem]{Conjecture}
\newtheorem{question}[theorem]{Question}
\newtheorem{lemma}[theorem]{Lemma}
\newtheorem{proposition}[theorem]{Proposition}
\newtheorem{example}[theorem]{Example}
\newtheorem{problem}[theorem]{Problem}
\newenvironment{proof}{\noindent {\bf
Proof.}}{\rule{3mm}{3mm}\par\medskip}
\newcommand{\remark}{\medskip\par\noindent {\bf Remark.~~}}
\newcommand{\pp}{{\it p.}}
\newcommand{\de}{\em}

\newcommand{\JEC}{{\it Europ. J. Combinatorics},  }
\newcommand{\JCTB}{{\it J. Combin. Theory Ser. B.}, }
\newcommand{\JCT}{{\it J. Combin. Theory}, }
\newcommand{\JGT}{{\it J. Graph Theory}, }
\newcommand{\ComHung}{{\it Combinatorica}, }
\newcommand{\DM}{{\it Discrete Math.}, }
\newcommand{\ARS}{{\it Ars Combin.}, }
\newcommand{\SIAMDM}{{\it SIAM J. Discrete Math.}, }
\newcommand{\SIAMADM}{{\it SIAM J. Algebraic Discrete Methods}, }
\newcommand{\SIAMC}{{\it SIAM J. Comput.}, }
\newcommand{\ConAMS}{{\it Contemp. Math. AMS}, }
\newcommand{\TransAMS}{{\it Trans. Amer. Math. Soc.}, }
\newcommand{\AnDM}{{\it Ann. Discrete Math.}, }
\newcommand{\NBS}{{\it J. Res. Nat. Bur. Standards} {\rm B}, }
\newcommand{\ConNum}{{\it Congr. Numer.}, }
\newcommand{\CJM}{{\it Canad. J. Math.}, }
\newcommand{\JLMS}{{\it J. London Math. Soc.}, }
\newcommand{\PLMS}{{\it Proc. London Math. Soc.}, }
\newcommand{\PAMS}{{\it Proc. Amer. Math. Soc.}, }
\newcommand{\JCMCC}{{\it J. Combin. Math. Combin. Comput.}, }
\newcommand{\GC}{{\it Graphs Combin.}, }
\thispagestyle{empty}
\title{The Terminal Wiener Index of Trees with  Diameter or Maximum Degree\thanks{
 Supported by National Natural Science Foundation of China
(No.11271256), Innovation Program of Shanghai Municipal Education Commission (No.14ZZ016) and Specialized Research Fund for the Doctoral Program of Higher Education (No.20130073110075). \newline \indent
$^{\dagger}$Corresponding  author ({\it E-mail address:}
xiaodong@sjtu.edu.cn)}}

\author{ Ya-Hong Chen$^{1,2}$ \\
{\small $^1$Department of Mathematics, and MOE-LSC,}
{\small Shanghai Jiao Tong University} \\
{\small  800 Dongchuan road, Shanghai, 200240,  P.R. China}\\
 {\small $^2$Department of Mathematics},
{\small Lishui University} \\
{\small  Lishui, Zhejiang 323000, PR China}\\
Xiao-Dong Zhang$^1$$^{\dagger}$\\
{\small $^1$Department of Mathematics, and MOE-LSC,}
{\small Shanghai Jiao Tong University} \\
{\small  800 Dongchuan road, Shanghai, 200240,  P.R. China}
}
\date{}

\maketitle
 \thispagestyle{empty}
 \begin{minipage}{4in}
 \begin{center}
 Abstract
 \end{center}
   The terminal Wiener index of a tree is the sum of distances for all pairs of pendent vertices, which recently arises in the study of phylogenetic tree reconstruction and the neighborhood of trees.  This paper presents a sharp upper and lower bounds for the terminal Wiener index in terms of its order and diameter and characterizes all extremal trees which attain these bounds. In addition, we investigate the properties of extremal trees which attain the maximum terminal Wiener index among all trees of order $n$ with fixed maximum degree.
   \end{minipage}
 \vskip 0.5cm
 {{\bf Key words:} Terminal Wiener index; Tree; Diameter; Maximum degree.
 \vskip 0.3cm
      {{\bf MSC2010:} 05C12, 05C07.}
      }
\vskip 0.5cm
\section{Introduction}
 Many topological indices (molecular-structure descriptors) have been put forward in different studies, from biochemistry to pure mathematics. The Wiener index, which is one of the oldest and most widely used indices in quantitative structure-activity relationships, has been received great attention by mathematicians and chemists (for example, see \cite{dobrynin2001,Gutman1997-k,gutman-k-2, wang2008,wiener1947}).
    Recently, some researchers considered terminal distance matrix \cite{horvat2008,randic2007} and found that it was used in the mathematical
modelling of proteins and genetic \cite{horvat2008,randic2007,randic2004} and regarded it as a source of novel  molecular-structure descriptors \cite{randic2007,smolenskii2009}.
Due to study  on the terminal distance matrix and its chemical applications,  Gutman, Furtula and Petrovi\'{c} \cite{gutman2009} first proposed the concept of {\it terminal Wiener index}, which is defined as the sum of distances between all pairs of pendent vertices of trees. The terminal Wiener index is also arisen in the study of  phylogenetic tree reconstruction and the neighborhood of trees \cite{allen2001,humphries2008}.  For more information on the terminal Wiener indices, the readers may refer to
  the recent papers \cite{baskar2013,chen2013,deng2012,gutman2013,heydari2010,schmuck2012,szekely2011,xu2014} and the references cited therein.

  Let $T=(V(T), E(T))$ be a
tree of order $n$ with vertex set $V(T)$ and edge
set $E(T)$. The distance between vertices $v_i$ and $v_j$ is the
  number of edges in  the shortest path from  $v_i$ to $v_j$  and denoted by
$d_T(v_i,v_j)$ (or for short $d(v_i,v_j)$). Moreover,  {\it terminal Wiener index}  $TW(T)$ of a tree $T$ can be expressed as
\begin{eqnarray}
 TW(T)=\sum\limits_{ \{v_i,v_j\}\subseteq L(T)} d_T(v_i,v_j),
\end{eqnarray}
where  $L(T)$ is the set of pendent vertices in $V(T)$, i.e., the set of vertices with degree 1 in $V(T)$.
Gutman  et al. \cite{gutman2009} gave a formula for the terminal Wiener
index of trees
\begin{eqnarray}\label{eqn2}
 TW(T)=\sum\limits_{e=uv\in E(T)}p_u(e|T)p_v(e|T),
\end{eqnarray}
where $p_u(e|T)$ and $p_v(e|T)$ denote the number of pendent vertices of two
components of $T-e$ containing $u$ and $v$, respectively.
   The rest of the paper is organized as follows. In Section 2, we present a sharp upper and lower bounds for terminal Wiener index of a tree in terms of the number of vertices and diameter and characterize all extremal trees which attain these bounds.  In section 3, we investigate the properties of terminal Wiener index of a tree with fixed maximum degree.

\section{\large\bf{Trees with fixed diameter}}
   In this section, we only consider the terminal Wiener index of  $n-$vertex trees with a fixed diameter $d$.  Let $\mathcal{T}_{n,d}$ denote the set of all the trees of order $n$ with fixed diameter $d$ and let
$\mathcal{T}_{n,d,l}$ denote the set of all the trees of order $n$ with fixed diameter $d$ and the number $l$ of the  pendent vertices. Clearly, $\mathcal{T}_{n,d}$ consists of only star $K_{1, n-1} $ for $d=2$, and only  path for $d=n-1$.  Moreover, $2\le l\le n-d+1$ with the left equality holding if and only if $d=n-1$.
A  tree $T$ is called {\it caterpillar} if the graph from $T$ by deleting its all pendent vertices is  a path.
  A tree is called {\it starlike  tree of degree $k$ } if there is only one vertex with degree $k\ge 3$.  Gutman et al. \cite{gutman2009} presented the following result.
  \begin{theorem}\cite{gutman2009}\label{th1}
  Let $T$ be an $n-$vertex tree with  the number $l\ge 3$ of pendent vertices. Then
  \begin{equation}\label{th1-eq1}
  TW(T)\ge (n-1)(l-1)
  \end{equation}
  with equality if and only if $T$ is starlike of degree $l$.
  \end{theorem}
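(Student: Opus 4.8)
The plan is to work entirely from the edge-contribution formula (\ref{eqn2}). The key preliminary observation is that for every edge $e=uv\in E(T)$ one has $p_u(e|T)+p_v(e|T)=l$ and $1\le p_u(e|T)\le l-1$, where the $p$'s count pendent vertices of $T$: deleting $e$ splits $T$ into two components, each pendent vertex of $T$ lies in exactly one of them, and each of the two components contains at least one pendent vertex of $T$ (a component consisting of a single vertex is a pendent vertex of $T$, and a component with at least two vertices has at least two leaves, at most one of which — its attachment vertex, incident with $e$ — can fail to be pendent in $T$).

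Granting this, the contribution of $e$ is $p_u(e|T)\,p_v(e|T)=a_e(l-a_e)$ with $a_e:=p_u(e|T)\in\{1,\dots,l-1\}$, and the quadratic $x\mapsto x(l-x)$ attains its minimum over the integer interval $[1,l-1]$ only at the endpoints, where its value is $l-1$. Summing over the $n-1$ edges yields $TW(T)\ge (n-1)(l-1)$, which is (\ref{th1-eq1}); equality holds precisely when $a_e\in\{1,l-1\}$ for every edge $e$, i.e.\ when one of the two components of $T-e$ contains exactly one pendent vertex of $T$.

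It remains to identify the trees satisfying this edge condition. One direction is immediate: if $T$ is starlike of degree $l$ with center $c$, then deleting any edge $e$ leaves one component contained in a single leg of $T$ (one pendent vertex) and the other containing $c$ together with the remaining $l-1$ legs, so equality holds. For the converse, let $w$ be a vertex of degree $k\ge 3$ (one exists because $l\ge 3$ forces $T$ to be a non-path), let $B_1,\dots,B_k$ be the branches of $T$ hanging off $w$, and let $b_i$ count the pendent vertices of $T$ lying in $B_i$. Then $b_i\ge 1$, $\sum_i b_i=l$, and the edge condition applied to the edge from $w$ into $B_i$ forces $b_i\in\{1,l-1\}$; since $k\ge 3$, no $b_i$ can equal $l-1$ (the remaining $\ge 2$ branches would already contribute more than $1$), so every $b_i=1$ and hence $k=l$. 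Finally, a branch $B_i$ containing exactly one pendent vertex of $T$ has, viewed as an abstract tree, at most two leaves, at most one of which (its attachment vertex) is non-pendent in $T$; hence $B_i$ is a path all of whose vertices have degree $\le 2$ in $T$. Therefore $w$ is the unique vertex of $T$ of degree $\ge 3$, so $T$ is starlike of degree $l$.

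I expect the genuinely delicate part to be this last paragraph — in particular, carefully translating between leaves of a branch regarded as an abstract tree and pendent vertices of the ambient tree $T$, and using this both to pin down $k=l$ and to exclude a second vertex of degree $\ge 3$. The inequality itself is routine once the identity $p_u(e|T)+p_v(e|T)=l$ and the elementary bound $a_e(l-a_e)\ge l-1$ are in place.
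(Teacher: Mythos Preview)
Your proof is correct. Note, however, that the present paper does not give its own proof of Theorem~\ref{th1}: the result is quoted from \cite{gutman2009} and used as a black box, so there is no in-paper argument to compare against. Your approach via the edge-contribution formula~(\ref{eqn2}) is the natural one and is exactly in the spirit of the source \cite{gutman2009}, where formula~(\ref{eqn2}) itself originates; the per-edge bound $p_u(e|T)\,p_v(e|T)\ge l-1$ with equality iff $\{p_u,p_v\}=\{1,l-1\}$ is precisely how such results are obtained there.

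A couple of minor comments on presentation. First, your justification that each component of $T-e$ contains at least one pendent vertex of $T$ is correct but could be stated a touch more crisply: in the component containing $u$, every leaf other than $u$ itself retains its $T$-degree and hence is pendent in $T$, and such a leaf exists whenever the component has at least two vertices. Second, in the equality characterization, the step ``hence $B_i$ is a path all of whose vertices have degree $\le 2$ in $T$'' implicitly uses that the attachment vertex $v_i$ must be an \emph{endpoint} of that path; this follows because otherwise both endpoints of $B_i$ would be leaves of $T$, contradicting $b_i=1$. Making this explicit would remove any doubt that $d_T(v_i)\le 2$ and therefore that $w$ is the unique vertex of degree $\ge 3$.
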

  In order to present the main result in this section, we need the following lemma.
  \begin{lemma}\label{lemma2}
  Let $T$ be an $n-$vertex tree with diameter $d$ and the number $l$ of the pendent vertices. Then
  \begin{equation}\label{le2-eq1}
  \lceil\frac{n-1}{\lfloor\frac{d}{2}\rfloor}\rceil\le l\le n-d+1, \ \ \ \rm{if} \ d\ \rm{is \ even};\end{equation}
   \begin{equation}\label{le2-eq2}
  \lceil\frac{n-2}{\lfloor\frac{d}{2}\rfloor}\rceil\le l\le n-d+1, \ \ \ \rm{if} \ d\ \rm{is \ odd}.\end{equation}
    \end{lemma}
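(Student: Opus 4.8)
The plan is to establish the upper bounds by a direct argument on a diametral path, and the lower bounds by exploiting the center of the tree together with a covering of $E(T)$ by short leaf-to-center paths.

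For the upper bound $l\le n-d+1$ (valid for both parities), fix a path $P=v_0v_1\cdots v_d$ realising the diameter. Each of the $d-1$ internal vertices $v_1,\dots,v_{d-1}$ has degree at least $2$ in $T$ and hence is not pendent, so at most $n-(d-1)$ vertices of $T$ are pendent, giving $l\le n-d+1$.

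For the lower bounds we may assume $d\ge2$ (so $d\ge3$ when $d$ is odd). I would invoke Jordan's theorem on tree centers: the center of $T$ is a single vertex $c$ when $d$ is even and an edge $c_1c_2$ when $d$ is odd, and the radius of $T$ equals $\lceil d/2\rceil$. If $d$ is even, then $\mathrm{ecc}(c)=d/2=\lfloor d/2\rfloor$, so every vertex lies within distance $\lfloor d/2\rfloor$ of $c$, and $c$ itself is not pendent. For each pendent vertex $v$ let $Q_v$ be the path from $v$ to $c$, which has at most $\lfloor d/2\rfloor$ edges. The key step is the claim that the $l$ paths $Q_v$ together contain every edge of $T$: deleting any edge $e$ disconnects $T$ into two parts, the one avoiding $c$ contains some pendent vertex $v$ of $T$, and then $e\in Q_v$. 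Consequently $n-1=|E(T)|\le l\lfloor d/2\rfloor$, and since $l$ is an integer this gives $l\ge\lceil(n-1)/\lfloor d/2\rfloor\rceil$. If $d$ is odd, delete $c_1c_2$ to obtain components $T_1\ni c_1$ and $T_2\ni c_2$; from $\mathrm{ecc}(c_i)=(d+1)/2$ one gets that every vertex of $T_i$ is within distance $(d-1)/2=\lfloor d/2\rfloor$ of $c_i$, and (for $d\ge3$) neither $c_1$ nor $c_2$ is pendent in $T$, so $l=l_1+l_2$, where $l_i$ counts the pendent vertices of $T$ lying in $T_i$. Running the same covering argument inside each $T_i$, where one also uses that a leaf of $T_i$ other than $c_i$ is a pendent vertex of $T$, gives $|V(T_i)|-1=|E(T_i)|\le l_i\lfloor d/2\rfloor$; summing and using $n-2=|E(T_1)|+|E(T_2)|$ produces $l\ge\lceil(n-2)/\lfloor d/2\rfloor\rceil$.

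I expect the main obstacle to be the edge-covering claim and, in the odd case, the bookkeeping needed to match pendent vertices of $T$ with leaves of the subtrees $T_i$ and to confirm that $c_1,c_2$ are not pendent; once these are settled, the bounds follow simply by counting the $n-1$ (resp.\ $n-2$) edges against the bound $\lfloor d/2\rfloor$ on the length of each leaf-to-center path.
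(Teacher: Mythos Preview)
Your proof is correct, but it takes a genuinely different route from the paper's. The paper anchors everything on a diametral path $P=v_0v_1\cdots v_d$: every vertex off $P$ is within distance $\lfloor d/2\rfloor$ of $P$, and each such vertex lies on the path from one of the $l-2$ pendent vertices other than $v_0,v_d$ down to $P$. Counting vertices gives the single inequality $\lfloor d/2\rfloor(l-2)+d+1\ge n$, which then splits into the two stated bounds by the arithmetic $d=2\lfloor d/2\rfloor$ (even) or $d=2\lfloor d/2\rfloor+1$ (odd). You instead anchor on the Jordan center and count edges rather than vertices: leaf-to-center paths of length $\le\lfloor d/2\rfloor$ cover all $n-1$ edges in the even case, and after deleting the central edge the same idea in each half covers $n-2$ edges in the odd case. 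Your approach explains structurally why the parity distinction arises (single central vertex vs.\ central edge), at the cost of invoking Jordan's theorem and checking a few auxiliary facts (that the central vertices are non-pendent, and that vertices in $T_i$ lie within $(d-1)/2$ of $c_i$). The paper's argument is shorter and more self-contained, needing nothing beyond the existence of a longest path; yours is perhaps more conceptual. Both are valid.
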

\begin{proof} Let $P=v_0v_1\cdots v_d$ be a longest path
since the diameter of $T$ is $d$. For any vertex $u\in V(T)\backslash\{v_0, \cdots, v_d\}$, the distance between vertex $u$ and the path $P$ is at most $\lfloor\frac{d}{2}\rfloor$, i.e., ${\rm{dist}}(u, P)=\min\{d(u, v)\ | \ v\in V(P)\}\le \lfloor\frac{d}{2}\rfloor$. Otherwise the diameter of $T$ is larger than $d$.
Since every vertex in $u\in V(T)\backslash\{v_0, \cdots, v_d\}$ lies on a path from some pendent vertex except $\{v_0, v_d\}$ to the path $P$,   we have
$\lfloor\frac{d}{2}\rfloor(l-2)+d+1\ge n$. Therefore
if $d$ is even, then $\lfloor\frac{d}{2}\rfloor l\ge n-1$, i.e., $l\ge  \lceil\frac{n-1}{\lfloor\frac{d}{2}\rfloor}\rceil$;
if $d$ is odd, then $\lfloor\frac{d}{2}\rfloor l\ge n-2$, i.e., $l\ge  \lceil\frac{n-2}{\lfloor\frac{d}{2}\rfloor}\rceil$. So the assertion holds.
\end{proof}

Now we are ready to present a sharp lower bound for the terminal Wiener index of $n-$vertex trees with fixed diameter $d$.

\begin{theorem}\label{th2}
Let $T$ be an $n-$vertex tree with fixed diameter $d$, i.e., $T\in {\mathcal{T}}_{n, d}$. Then
\begin{equation}\label{th2-eq1}
TW(T)\ge (n-1)(l_0-1),
\end{equation}
where
$$l_0=\left\{\begin{array}{ll}
 \lceil\frac{n-1}{\lfloor\frac{d}{2}\rfloor}\rceil, &  {\rm{if}} \ d\ {\rm{is \ even}},\\
  \lceil\frac{n-2}{\lfloor\frac{d}{2}\rfloor}\rceil,  & {\rm{if}} \ d\ {\rm{is \ odd}}.
\end{array}\right.$$
Moreover, if $d\ge 3$, equality (\ref{th2-eq1}) holds if and only if $T$ is  starlike trees of degree $l_0$ and diameter $d$.
\end{theorem}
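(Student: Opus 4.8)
The strategy is to read off (\ref{th2-eq1}) from the two results already available. Write $l=|L(T)|$ for the number of pendent vertices of $T$. By Lemma \ref{lemma2}, $l\ge l_0$ in either parity of $d$, and since $x\mapsto (n-1)(x-1)$ is increasing it suffices to show $TW(T)\ge (n-1)(l-1)$. If $l\ge 3$ this is precisely Theorem \ref{th1}, and then $TW(T)\ge (n-1)(l-1)\ge (n-1)(l_0-1)$. The only remaining possibility is $l=2$, in which case $T$ is the path $P_n$ and $d=n-1$; a one-line check gives $l_0=2$ and $TW(P_n)=n-1=(n-1)(l_0-1)$. In every case (\ref{th2-eq1}) holds.

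For the extremal characterization, suppose $d\ge 3$. Since $\mathcal{T}_{n,n-1}=\{P_n\}$, for which equality is immediate, I would dispose of $d=n-1$ separately and assume henceforth $3\le d\le n-2$; a short computation shows this is exactly the range in which $l_0\ge 3$. If equality holds in (\ref{th2-eq1}), then both $TW(T)\ge (n-1)(l-1)$ and $(n-1)(l-1)\ge (n-1)(l_0-1)$ must be equalities. The equality statement of Theorem \ref{th1} turns the first into the assertion that $T$ is starlike of degree $l$, and the second forces $l=l_0$; since $T\in\mathcal{T}_{n,d}$ has diameter $d$ by hypothesis, $T$ is a starlike tree of degree $l_0$ and diameter $d$. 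Conversely, any starlike tree of degree $l_0$ with diameter $d$ lies in $\mathcal{T}_{n,d}$ and has exactly $l_0$ pendent vertices, so $TW(T)=(n-1)(l_0-1)$ by Theorem \ref{th1}.

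To confirm that the bound is genuinely attained, I would exhibit such a tree explicitly: start from a path $v_0v_1\cdots v_d$ and attach at its central vertex $v_{\lfloor d/2\rfloor}$ several pendent paths, each of length at most $\lfloor d/2\rfloor$, chosen to absorb the remaining $n-1-d$ vertices. This tree has $v_{\lfloor d/2\rfloor}$ as its unique vertex of degree $\ge 3$, still has diameter $d$, and has $2+\lceil (n-1-d)/\lfloor d/2\rfloor\rceil$ pendent vertices, a quantity that collapses to $l_0$ in each parity of $d$. I expect this final step --- choosing the leg lengths so that the diameter is neither pushed above $d$ nor left below $d$, and checking that the ceiling count reproduces the two formulas defining $l_0$ --- to be the only genuine piece of bookkeeping; everything else is formal once Theorem \ref{th1} and Lemma \ref{lemma2} are in hand.
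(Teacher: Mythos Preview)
Your argument is correct and follows essentially the same route as the paper: combine Lemma~\ref{lemma2} with Theorem~\ref{th1} to get the chain $TW(T)\ge (n-1)(l-1)\ge (n-1)(l_0-1)$, and read off the equality case from the equality condition in Theorem~\ref{th1} together with $l=l_0$. The paper's version inserts an unnecessary minimizer $T^*\in\mathcal{T}_{n,d}$ before running the same inequalities, and places the explicit construction of an extremal starlike tree in a remark after the proof rather than inside it; your treatment of the edge case $d=n-1$ and the verification that $l_0\ge 3$ exactly when $d\le n-2$ are a bit more explicit than the paper's.
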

\begin{proof} If $d=n-1, $ the assertion holds. Assume that $d\le n-2$.
Let $T^*$ be an $n-$vertex tree with diameter $d$ such that $$
TW(T)\ge TW(T^*)\ \ \ \rm{ for }\ T\in  {\mathcal{T}}_{n,d}.$$
Denote by $l$ the number of pendent vertices of $T^*$.
By Lemma~\ref{lemma2}, $l\ge l_0\ge 3$.
On the other hand,
$T^*\in {\mathcal{T}}_{n, d, l}\subseteq {\mathcal{T}}_{n,l}$. Hence by Theorem~\ref{th1}, we have $TW(T^*)\ge (n-1)(l-1)$ with equality if and only if $
T^*$ is  starlike trees of degree $l$. Therefore
$$TW(T)\ge TW(T^*)\ge (n-1)(l-1)\ge (n-1)(l_0-1)$$
with equality if and only if $T$ is starlike trees of degree $l_0$ with diameter $d$.
\end{proof}

{\bf Remark} For given an $n$ and $d\le n-2$, there always exists at least one $n-$vertex starlike tree $T$ of degree $l_0$ with diameter $d$. For example, the $n-$vertex tree $T$ is obtained from $l_0-2$ paths of length $\lfloor\frac{d}{2}\rfloor$ and 2 paths of length
$\lceil\frac{d}{2}\rceil, n-\lfloor\frac{d}{2}\rfloor(l_0-2)-\lceil\frac{d}{2}\rceil-1,$ respectively,  by identifying one end of their paths. Moreover, the following result can be easily obtained from the proof of Theorem~5 in \cite{gutman2009}.

\begin{lemma}\cite{gutman2009}\label{lem4}
Let $g(x)=x(x-1)+(n-x-1)\lfloor\frac{x}{2}\rfloor \lceil\frac{x}{2}\rceil$ be positive integer function on $x$, where $n\ge 3$ is positive integer. Then
$g(x)$ is strictly increasing with respect to $2\le x\le \lfloor\frac{2n}{3}\rfloor+2; $ and strictly decreasing with respect to $\lfloor\frac{2n+1}{3}\rfloor+2\le x\le n-2.$
Moreover,
\begin{equation}\label{th5-eq1}
g(x)\le \left\{\begin{array}{ll}
\frac{1}{27}(n^3+9n^2+9n-27), & {\rm if}\  3\mid n; \\
\frac{1}{27}(n^3+9n^2+6n-16), & {\rm if} \ 3\mid (n-1);\\
\frac{1}{27}(n^3+9n^2+6n-2), & {\rm  if } \ 3\mid (n-2);\end{array}\right.
\end{equation}
with equality holding if and only if
$$x=\left\{\begin{array}{ll}
\lfloor\frac{2n}{3}\rfloor+2, & \rm{if}\  3\mid n;\\
\lfloor\frac{2n}{3}\rfloor+2 \ \rm{or}\ \lfloor\frac{2n+1}{3}\rfloor+2, & \rm{if }\ 3\mid (n-1);\\
\lfloor\frac{2n}{3}\rfloor+2, & \rm{if} \ 3\mid (n-2).\end{array}\right.$$
 \end{lemma}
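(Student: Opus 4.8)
\medskip
\noindent\emph{Proposed proof.}
The plan is to analyse the forward difference $\delta(x):=g(x+1)-g(x)$, show that it is positive for $x$ up to $\lfloor 2n/3\rfloor+2$ and negative for $x$ from $\lfloor(2n+1)/3\rfloor+2$ on, and then read off the maximum of $g$ by substituting at the peak. (Heuristically, ignoring the floor function one has $g(x)=-\frac14 x^3+\frac{n+3}{4}x^2-x$ up to a term of size $O(n)$, and this cubic has its interior critical point just below $\frac{2n}{3}+2$; the job is to make this exact for the parity-sensitive integer-valued $g$.) Using $\lfloor x/2\rfloor\lceil x/2\rceil=\lfloor x^2/4\rfloor$ and expanding $g$ separately for even and odd argument, a short computation gives
\[
\delta(2k)=k(n+2-3k),\qquad \delta(2k+1)=-3k^2+(n-2)k+(n-1)
\]
for integers $k\ge1$, respectively $k\ge0$. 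So on even arguments $\delta$ is controlled by the linear factor $n+2-3k$, and on odd arguments by the concave quadratic $q(k)=-3k^2+(n-2)k+(n-1)$, which satisfies $q(0)=n-1>0$, has unique positive root $\big((n-2)+\sqrt{n^2+8n-8}\,\big)/6$, is positive strictly between its two roots and negative outside, and (since $n^2+8n-8$ is a perfect square only for $n\in\{1,3\}$) vanishes at no integer once $n\ge4$.

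Next I would pin down the sign change in the three residue classes $n=3m$, $n=3m+1$, $n=3m+2$. In each, write $\lfloor 2n/3\rfloor+2$ and $\lfloor(2n+1)/3\rfloor+2$ explicitly, and verify: (i) $\delta(x)>0$ for every integer $x$ with $2\le x\le\lfloor 2n/3\rfloor+1$ --- for $x=2k$ this amounts to $3k\le n+1$, which holds because $3\lfloor 2n/3\rfloor\le 2n$, and for $x=2k+1$ it amounts to $q(k)>0$, which holds by concavity as soon as $q$ is positive at the two endpoints of the relevant range of $k$; and (ii) $\delta(x)<0$ for every integer $x\ge\lfloor(2n+1)/3\rfloor+2$ --- again split by parity, each factor having already changed sign there since $\lfloor(2n+1)/3\rfloor+2>\frac{2(n+2)}{3}$. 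This yields that $g$ is strictly increasing on $[2,\lfloor 2n/3\rfloor+2]$ and strictly decreasing on $[\lfloor(2n+1)/3\rfloor+2,\infty)$. When $n\equiv1\pmod3$ these two endpoints differ by $1$, and precisely then $\delta(\lfloor 2n/3\rfloor+2)=\delta(2m+2)=(m+1)(n+2-3(m+1))=0$, so $g$ has a single flat step bridging the increasing and decreasing parts; in the other two classes the endpoints coincide and the maximum is strict.

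It follows that $g$ attains its maximum over the positive integers exactly at $x=\lfloor 2n/3\rfloor+2$ --- and, when $n\equiv1\pmod3$, also at $x=\lfloor 2n/3\rfloor+3$. Substituting $x=2m+2$ for $n\in\{3m,3m+1\}$ and $x=2m+3$ for $n=3m+2$ into $g$ and simplifying gives, respectively, $\frac1{27}(n^3+9n^2+9n-27)$, $\frac1{27}(n^3+9n^2+6n-16)$ and $\frac1{27}(n^3+9n^2+6n-2)$, which is exactly the stated bound and equality cases. (The lone genuinely degenerate value is $n=3$, where the ``strictly increasing'' clause fails because of one flat odd step; this is checked by hand and does not affect the applications.) I expect the only real work to be the bookkeeping in the middle step: matching the parity-dependent sign thresholds of $n+2-3k$ and of $q(k)$ exactly with the floor expressions $\lfloor 2n/3\rfloor+2$ and $\lfloor(2n+1)/3\rfloor+2$, and controlling the borderline value $x=\lfloor 2n/3\rfloor+2$, whose $\delta$ is positive, zero or negative depending on $n\bmod3$ and on the parity of that argument. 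Splitting into the three residues of $n$ modulo $3$ is what keeps these off-by-one issues transparent.
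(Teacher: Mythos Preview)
Your approach is correct, and there is essentially nothing to compare against: in the paper this lemma is not proved but merely attributed to \cite{gutman2009}, with the comment that it ``can be easily obtained from the proof of Theorem~5'' there. Your forward-difference analysis is the natural direct argument, and the two key identities
\[
\delta(2k)=k(n+2-3k),\qquad \delta(2k+1)=-3k^{2}+(n-2)k+(n-1)
\]
are correct (I checked them), as are the final substitutions giving the three displayed maxima. The residue-by-residue bookkeeping you outline does go through: for $n=3m$ one has $q(m)=m-1>0$ and $q(m+1)=-2m-6<0$; for $n=3m+1$, $q(m)=2m>0$ and $q(m+1)=-m-4<0$; for $n=3m+2$, $q(m)=3m+1>0$ and $q(m+1)=-2<0$; together with the linear sign change of $n+2-3k$ this pins down the turning point exactly where the statement claims, and in the $n\equiv1\pmod3$ case one indeed gets $\delta(2m+2)=0$, producing the double maximiser. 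Your flagging of the degenerate value $n=3$ (where $q(1)=0$ forces $g(3)=g(4)$, so strict increase on $[2,4]$ fails) is also accurate; the lemma as stated is slightly off there, though harmless for the applications since the upper bound and equality case survive.
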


\begin{theorem}\label{th3-d-max}
Let $T$ be an $n-$vertex tree with diameter $d$, i.e., $T\in \mathcal{T}_{n, d}$. If $d\ge\lfloor\frac{n-2}{3}\rfloor$, then
\begin{equation}\label{th3-eq1}
TW(T)\le (n-d+1)(n-d)+(d-2)\lfloor\frac{n-d+1}{2}\rfloor \lceil\frac{n-d+1}{2}\rceil.
\end{equation}
Moreover, if $n-d+1$ is even, then equality in (\ref{th3-eq1}) holds if and only if $T$ is
obtained from the path $P_{d-1}$ of order $d-1$ by attaching to each of its terminal vertices $\frac{n-d+1}{2}$ new pendent vertices and this tree is unique. If $n-d+1$ is odd, then equality in (\ref{th3-eq1}) holds if and only if $T$ is
obtained from the path $P_{d-1}$ of order $d-1$ by attaching to each of its terminal vertices $\lfloor\frac{n-d+1}{2}\rfloor$ new pendent vertices and by attaching one pendent vertex to some vertex of $P_{d-1}$ and there are $\lfloor\frac{d}{2}\rfloor$ distinct trees.
\end{theorem}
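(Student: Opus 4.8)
\emph{Proof proposal.}

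The plan is to read the bound off the edge formula (\ref{eqn2}) together with Lemmas~\ref{lemma2} and~\ref{lem4}, and then to chase the equality cases. We may assume $3\le d\le n-2$, since for $d=n-1$ the tree is the path $P_n$ and for $d=2$ it is the star $K_{1,n-1}$, and in both cases (\ref{th3-eq1}) and the description of the extremal tree are checked directly. Fix $T\in\mathcal{T}_{n,d}$ and put $l=|L(T)|$. In (\ref{eqn2}) separate the $l$ pendent edges, each of which contributes $1\cdot(l-1)$, from the remaining $n-1-l$ internal edges $e=uv$; for every internal edge $p_u(e|T)+p_v(e|T)=l$, hence $p_u(e|T)p_v(e|T)\le\lfloor\frac{l}{2}\rfloor\lceil\frac{l}{2}\rceil$, with equality precisely when $\{p_u(e|T),p_v(e|T)\}=\{\lfloor\frac{l}{2}\rfloor,\lceil\frac{l}{2}\rceil\}$. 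Therefore $TW(T)\le l(l-1)+(n-1-l)\lfloor\frac{l}{2}\rfloor\lceil\frac{l}{2}\rceil=g(l)$, with $g$ as in Lemma~\ref{lem4}. By Lemma~\ref{lemma2}, $l\le n-d+1$, and the hypothesis $d\ge\lfloor\frac{n-2}{3}\rfloor$ is exactly what forces $n-d+1$ not to exceed the largest point at which $g$ attains its maximum; so Lemma~\ref{lem4} gives $g(l)\le g(n-d+1)$. Since $n-1-(n-d+1)=d-2$, the value $g(n-d+1)$ coincides with the right-hand side of (\ref{th3-eq1}), which proves the inequality.

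For the characterization, $TW(T)=g(n-d+1)$ forces both $TW(T)=g(l)$ and $g(l)=g(n-d+1)$. The first says every internal edge of $T$ splits $L(T)$ into parts of sizes $\lfloor\frac{l}{2}\rfloor$ and $\lceil\frac{l}{2}\rceil$. For the second, because $g$ is strictly increasing up to $\lfloor\frac{2n}{3}\rfloor+2$ (Lemma~\ref{lem4}), one gets $l=n-d+1$ in every case except the single situation $3\mid(n-1)$ and $d=\lfloor\frac{n-2}{3}\rfloor$, where $g$ is also maximized at the even value $l=n-d$. That sub-case I would discard thus: if $l=n-d$, the $n-l=d$ non-pendent vertices induce a subtree $T'$; since every internal edge now splits $L(T)$ into two equal halves, a non-pendent vertex $x$ with two non-pendent neighbours $y,z$ would have the two components of $T-xy$ and $T-xz$ avoiding $x$ disjoint, each containing one half of $L(T)$ and hence together all of it, leaving none for $x$ or for any further branch at $x$; so $x$ has degree $2$. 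Hence $T'$ is a path, all pendent vertices hang from its two endpoints, and $T$ contains two pendent vertices at distance $(d-1)+2=d+1$, contradicting $\mathrm{diam}(T)=d$. Thus equality forces $l=n-d+1$.

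It remains to describe the trees with $l=n-d+1$ all of whose internal edges split $L(T)$ as evenly as possible. The $n-l=d-1$ non-pendent vertices induce a subtree which contains the $d-1$ interior vertices of a diametral path $v_0v_1\cdots v_d$, hence, having exactly $d-1$ vertices, equals the path they span; write it $u_1u_2\cdots u_{d-1}$, let $a_i\ge0$ be the number of pendent vertices attached to $u_i$, so $\sum_i a_i=n-d+1$, and note the internal edges are exactly $u_iu_{i+1}$ for $1\le i\le d-2$. The even-split condition is that every partial sum $a_1+\cdots+a_i$ with $1\le i\le d-2$ lies in $\{\lfloor\frac{n-d+1}{2}\rfloor,\lceil\frac{n-d+1}{2}\rceil\}$; as these partial sums are non-decreasing, this determines $(a_i)$ up to a single choice: if $n-d+1$ is even every partial sum equals $\frac{n-d+1}{2}$, forcing $a_1=a_{d-1}=\frac{n-d+1}{2}$ and $a_2=\cdots=a_{d-2}=0$, the unique tree in the statement; if $n-d+1$ is odd the partial sums jump from $\lfloor\frac{n-d+1}{2}\rfloor$ to $\lceil\frac{n-d+1}{2}\rceil$ at one index, which places $\lfloor\frac{n-d+1}{2}\rfloor$ pendent vertices at each of $u_1,u_{d-1}$ and one extra pendent vertex at a single vertex $u_m$ with $m\in\{1,\dots,d-1\}$. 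Since $\lfloor\frac{n-d+1}{2}\rfloor\ge1$, both $u_1$ and $u_{d-1}$ receive a pendent vertex, so $\mathrm{diam}(T)=d$; and the mirror automorphism $u_m\leftrightarrow u_{d-m}$ of $P_{d-1}$ identifies the position $m$ with $d-m$, leaving exactly $\lceil\frac{d-1}{2}\rceil=\lfloor\frac{d}{2}\rfloor$ non-isomorphic trees, as claimed. The main obstacle I anticipate is the exceptional sub-case where $g$ has two maxima — ruling out $l=n-d$ via the diameter constraint — and extracting the precise count $\lfloor\frac{d}{2}\rfloor$ from the partial-sum condition; once the non-pendent vertices are recognised as $P_{d-1}$, the rest is routine.
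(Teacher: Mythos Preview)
Your argument follows essentially the same route as the paper: bound $TW(T)\le g(l)$ edge-by-edge (this is Theorem~4 of \cite{gutman2009}, which you re-derive), then use the monotonicity of $g$ from Lemma~\ref{lem4} together with $l\le n-d+1$ to reach $g(n-d+1)$, and finally read the extremal structure off the equality conditions on each internal edge. The paper does the same, splitting into residue classes $n\equiv 0,1,2\pmod 3$ and invoking \cite{gutman2009} for the first step.

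Your treatment of the equality case is in fact more careful than the paper's. When $3\mid(n-1)$ and $d=\lfloor\frac{n-2}{3}\rfloor$, Lemma~\ref{lem4} allows $g(l)=g(n-d+1)$ for $l=n-d$ as well as $l=n-d+1$; the paper's proof only treats $n=3p$ explicitly and dismisses the other residues with ``by similar method'', never confronting this second maximiser. Your diameter argument (the $d$ non-pendent vertices would form a path $P_d$, forcing diameter $d+1$) cleanly closes this gap. Likewise, your partial-sum description of the odd case, together with the orbit count $\lceil\frac{d-1}{2}\rceil=\lfloor\frac{d}{2}\rfloor$ under the mirror symmetry of $P_{d-1}$, is more explicit than the paper's sketch. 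So: same strategy, but your execution is tighter on exactly the point you flagged as the main obstacle.
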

\begin{proof} If $n=3p$, let $l$ be the number of pendent vertices of $T$. Then  $l\le n-d+1
\le 3p-(p-1)+1=2p+2=\lfloor\frac{2n}{3}\rfloor+2$.  By Theorem~4 in \cite{gutman2009} and Lemma~\ref{lem4},
\begin{eqnarray*}
TW(T)&\le& l(l-1)+(n-1-l)\lfloor\frac{l}{2}\rfloor\lceil\frac{l}{2}\rceil\\
 &\le & (n-d+1)(n-d)+(d-2)\lfloor\frac{n-d+1}{2}\rfloor \lceil\frac{n-d+1}{2}\rceil.
 \end{eqnarray*}
 If equality in (\ref{th3-eq1}) holds, then by Lemma~\ref{lem4}, $l=n-d+1$. Moreover, by Theorem~4 in \cite{gutman2009}, all non-terminal edges $e=uv$,  we have $p_u(e|T)= \lfloor\frac{n-d+1}{2}\rfloor$  and  $p_v(e|T)= \lceil\frac{n-d+1}{2}\rceil$. Let $P_{d+1}=v_0v_1\cdots v_d$ be the longest path of $T$. Hence
  if $n-d+1$ is even, then for $e_1=v_1v_2$ and $e_{d-2}=v_{d-2}v_{d-1},$
   we have  $p_{v_1}(e_1|T)=\frac{n-d+1}{2}$ and  $p_{v_{d-1}}(e_{d-2}|T)=\frac{n-d+1}{2}$. So $T$ is
obtained from the path $P_{d-1}$ of order $d-1$ by attaching to each of its terminal vertices $\frac{n-d+1}{2}$ new pendent vertices and this tree is unique.
If $n-d+1$ is odd, then  $p_{v_1}(e_1|T)\ge \lfloor\frac{n-d+1}{2}\rfloor$ and  $p_{v_{d-1}}(e_{d-2}|T)\ge \lfloor\frac{n-d+1}{2}\rfloor$. Hence  $T$ is
obtained from the path $P_{d-1}$ of order $d-1$ by attaching to each of its terminal vertices $\lfloor\frac{n-d+1}{2}\rfloor$ new pendent vertices and by attaching one pendent vertex to some vertex of $P_{d-1}$ and there are $\lfloor\frac{d}{2}\rfloor$ distinct trees.  Conversely, it is easy to show that the equality holds.

 If $n=3p+1$ and $n=3p+2$, then by similar method, we can prove the assertion holds.
\end{proof}

{\bf Remark}  If $d=2$ or $d=3$, Theorem~\ref{th3-d-max} is still true. But if $4\le d<\lfloor\frac{n-2}{3}\rfloor$, Theorem~\ref{th3-d-max} is, in general, not true.  With aid of computing calculation, trees $ T_1, T_2, T_3, T_4$ (see Fig.1) have the largest terminal Wiener indices among all trees of order
$n=23$ with $ d=4$,   $n=30$ with $ d=5$, $n=40$ with $ d=6$, and $n=40$ with $ d=7$, respectively.

\setlength{\unitlength}{1mm}
\begin{picture}(140, 45)
\centering\thicklines \put(20,40){\circle*{1}}
\put(20,30){\circle*{1}} \put(5,30){\circle*{1}} \put(35,30){\circle*{1}}
\put(0,20){\circle*{1}}\put(10,20){\circle*{1}} \put(15,20){\circle*{1}}\put(25,20){\circle*{1}}
\put(30,20){\circle*{1}}\put(40,20){\circle*{1}}
\put(3,20){\circle*{0.6}}\put(5,20){\circle*{0.6}}\put(7,20){\circle*{0.6}}\put(18,20){\circle*{0.6}}\put(20,20){\circle*{0.6}}\put(22,20){\circle*{0.6}}
\put(33,20){\circle*{0.6}}\put(35,20){\circle*{0.6}}\put(37,20){\circle*{0.6}}
\put(20,40){\line(3,-2){15}} \put(20,40){\line(-3,-2){15}}\put(20,40){\line(0,-1){10}}
\put(5,30){\line(1,-2){5}} \put(5,30){\line(-1,-2){5}}\put(20,30){\line(1,-2){5}}\put(20,30){\line(-1,-2){5}}
\put(35,30){\line(1,-2){5}}\put(35,30){\line(-1,-2){5}}

\put(85,40){\circle*{1}} \put(70,35){\circle*{1}}  \put(100,35){\circle*{1}}
 \put(65,30){\circle*{1}}   \put(75,30){\circle*{1}} \put(95,30){\circle*{1}} \put(105,30){\circle*{1}}
 \put(61,20){\circle*{1}}\put(69,20){\circle*{1}}\put(71,20){\circle*{1}}\put(79,20){\circle*{1}}
\put(85,40){\line(-3,-1){15}}\put(85,40){\line(3,-1){15}}
\put(70,35){\line(-1,-1){5}}\put(70,35){\line(1,-1){5}}\put(100,35){\line(-1,-1){5}}\put(100,35){\line(1,-1){5}}
\put(65,30){\line(-2,-5){4}} \put(65,30){\line(2,-5){4}}\put(75,30){\line(2,-5){4}}\put(75,30){\line(-2,-5){4}}

\put(63.5,20){\circle*{0.6}}\put(65,20){\circle*{0.6}}\put(66.5,20){\circle*{0.6}}\put(73.5,20){\circle*{0.6}}\put(75,20){\circle*{0.6}}\put(76.5,20){\circle*{0.6}}
\put(98,30){\circle*{0.6}}\put(100,30){\circle*{0.6}}\put(102,30){\circle*{0.6}}
\put(1,19){$\underbrace{}_6$}\put(16,19){$\underbrace{}_6$}\put(31,19){$\underbrace{}_7$}
\put(61,19){$\underbrace{}_7$}\put(71,19){$\underbrace{}_8$}\put(96,29){$\underbrace{}_{10}$}
\put(8,5){\small{$TW(T_1)=582$ }}\put(70,5){\small{$TW(T_2)=1162$ }}

\end{picture}

\setlength{\unitlength}{1mm}
\begin{picture}(140, 45)
\centering\thicklines  \put(20,45){\circle*{1}}
\put(20,39){\circle*{1}} \put(5,39){\circle*{1}} \put(35,39){\circle*{1}}
\put(5,30){\circle*{1}}\put(20,30){\circle*{1}} \put(35,30){\circle*{1}}
\put(1,20){\circle*{1}}\put(9,20){\circle*{1}}\put(16,20){\circle*{1}}\put(24,20){\circle*{1}}\put(31,20){\circle*{1}}\put(39,20){\circle*{1}}
\put(3,20){\circle*{0.6}}\put(5,20){\circle*{0.6}}\put(7,20){\circle*{0.6}}
\put(18,20){\circle*{0.6}}\put(20,20){\circle*{0.6}}\put(22,20){\circle*{0.6}}
\put(33,20){\circle*{0.6}}\put(35,20){\circle*{0.6}}\put(37,20){\circle*{0.6}}
\put(20,45){\line(5,-2){15}} \put(20,45){\line(-5,-2){15}}\put(20,45){\line(0,-1){6}}
\put(5,39){\line(0,-1){10}}\put(20,39){\line(0,-1){10}}\put(35,39){\line(0,-1){10}}
\put(5,30){\line(-2,-5){4}}\put(5,30){\line(2,-5){4}}\put(20,30){\line(-2,-5){4}}\put(20,30){\line(2,-5){4}}\put(35,30){\line(-2,-5){4}}\put(35,30){\line(2,-5){4}}

\put(85,45){\circle*{1}}
\put(85,39){\circle*{1}} \put(70,39){\circle*{1}} \put(100,39){\circle*{1}}
\put(70,32){\circle*{1}}\put(85,32){\circle*{1}} \put(100,32){\circle*{1}}
\put(66,26){\circle*{1}}\put(74,26){\circle*{1}}\put(81,26){\circle*{1}}\put(89,26){\circle*{1}}\put(100,26){\circle*{1}}
\put(96,20){\circle*{1}}\put(104,20){\circle*{1}}
\put(68,26){\circle*{0.6}}\put(70,26){\circle*{0.6}}\put(72,26){\circle*{0.6}}
\put(83,26){\circle*{0.6}}\put(85,26){\circle*{0.6}}\put(87,26){\circle*{0.6}}
\put(98,20){\circle*{0.6}}\put(102,20){\circle*{0.6}}\put(100,20){\circle*{0.6}}
\put(85,45){\line(5,-2){15}} \put(85,45){\line(-5,-2){15}}\put(85,45){\line(0,-1){6}}
\put(70,39){\line(0,-1){7}}\put(85,39){\line(0,-1){7}}\put(100,39){\line(0,-1){7}}\put(100,32){\line(0,-1){6}}
\put(70,32){\line(-2,-3){4}}\put(70,32){\line(2,-3){4}}\put(85,32){\line(-2,-3){4}}\put(85,32){\line(2,-3){4}}
\put(100,26){\line(-2,-3){4}}\put(100,26){\line(2,-3){4}}

\put(1,19){$\underbrace{}_{11}$}\put(16,19){$\underbrace{}_{11}$}\put(31,19){$\underbrace{}_{11}$}
\put(66,25){$\underbrace{}_{10}$}\put(81,25){$\underbrace{}_{10}$}\put(96,19){$\underbrace{}_{12}$}
\put(8,6){\small{$TW(T_3)=2508$ }}\put(70,6){\small{$TW(T_4)=2592$ }}
 \put(5,0){\small{Fig.1~~~$T_1, T_2, T_3, T_4$ have the maximum terminal Wiener index  }}
 \put(15, -5) {\small{  of trees  in $\mathcal{T}_{23,4}$,$\mathcal{T}_{30,5}$, $\mathcal{T}_{40,6}$ and $\mathcal{T}_{40,7}$}, respectively}
\end{picture}

\section{\large\bf{Terminal Wiener index with fixed maximum degree}}

  Let $\mathcal{T}_{n,\Delta}$ denote the set of all the trees of order $n$ and with maximum degree $\Delta$.
If $\Delta=2$, $\mathcal{T}_{n,2}$ consists of path $P_n$ of order $n$, and $TW(P_n)=n-1$. If $\Delta=n-1$, then $\mathcal{T}_{n,n-1}$ consists of star $K_{1,n-1}$ and $TW(K_{1,n-1})=(n-1)(n-2)$. By \cite{chen2013}, the extremal trees having the minimum terminal Wiener index in $\mathcal{T}_{n,\Delta}$  are starlike trees. It is natural to ask which are extremal trees having the maximum terminal Wiener index. Through this section, assume that $3\le \Delta\le n-2$.  A tree $T^*$ in  $\mathcal{T}_{n,\Delta}$ is called {\it optimal} tree if  $TW(T^*)\ge TW(T)$ for all $T\in \mathcal{T}_{n,\Delta}$. In this section, we discuss some properties of optimal trees.  Schmuck, Wagner and Wang \cite{schmuck2012} proved  the following result.
\begin{theorem}\cite{schmuck2012}\label{de-seq}
Let  $\mathcal{T}_\pi$ be the set of all trees with a given degree sequence $\pi=(d_1,d_2,\cdots,d_n)$ and $d_1\geq d_2\geq\cdots\geq d_k\ge 2>d_{k+1}=\cdots =d_n=1$.
If $d_2\ge 3$ and $TW(T^*)\ge TW(T)$ for any $T\in  \mathcal{T}_\pi$, then
$T^*$
is  an $n-$vertex caterpillar associated with $v_1, \cdots v_k$ vertices on the backbone of $T^*$ in this order with $d(v_i)=x_i+2$, $i=1, \cdots k$, and
\begin{eqnarray}\label{9}
 TW(T^*)=(n-1)(n-k-1)+F(x_1,\cdots,x_k),
 \end{eqnarray}
 where
$$ F(x_1,\cdots,x_k)=max\{F(y_1,\cdots,y_k)=\sum\limits_{i=1}^{k-1}(\sum\limits_{j=1}^iy_j)(\sum\limits_{j=i+1}^ky_j): y_1\geq y_k\}$$
and the maximum is taken over all permutations $(y_1,\cdots,y_k)$ of $(d_1-2,\cdots,d_k-2)$.
\end{theorem}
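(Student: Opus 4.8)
The plan is to pass from $T$ to the subtree spanned by its non‑leaf vertices, show that a maximiser forces that subtree to be a path, and then evaluate $TW$ on caterpillars. For $T\in\mathcal T_\pi$ let $T'$ be the subtree of $T$ spanned by its $k$ vertices $v_1,\dots,v_k$ of degree $\ge 2$, and for $v\in V(T')$ let $\ell(v):=\deg_T(v)-\deg_{T'}(v)$ be the number of pendant leaves of $T$ attached at $v$, so $\sum_v\ell(v)=n-k=:L$. Since two leaves of $T$ attached at $v_i\ne v_j$ are at distance $d_{T'}(v_i,v_j)+2$, and two leaves at the same vertex are at distance $2$, one gets immediately $TW(T)=L(L-1)+\Phi(T')$, where $\Phi(T'):=\sum_{1\le i<j\le k}\ell(v_i)\ell(v_j)\,d_{T'}(v_i,v_j)$. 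Thus the task becomes: maximise $\Phi(T')$ over trees $T'$ on $k$ vertices carrying a weighting $\ell\ge 0$ whose ``degree$+$weight'' multiset equals $\{d_1,\dots,d_k\}$.

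Next comes the reduction to caterpillars. Suppose $T$ is not a caterpillar, i.e.\ $T'$ has a vertex of degree $\ge 3$. A case analysis — and this is precisely where the hypothesis $d_2\ge 3$, i.e.\ that $T$ has at least two vertices of degree $\ge 3$, is used — produces a branching vertex $q$ of $T'$ together with a component $C$ of $T'-q$ with $\gamma:=\sum_{b\in C}\ell(b)\ge 2$. Among the leaves of $T'$ outside $C$, pick one, $q_0$, maximising $\hat g(z):=\sum_{b\notin C}\ell(b)\,d_{T'}(z,b)$; since $\hat g$ restricted to the subtree $T'-C$ is a weighted distance sum on that subtree, attaining its maximum at one of its leaves, while $q$ has degree $\ge 2$ in $T'-C$ and so is not such a leaf, we get $\hat g(q_0)\ge\hat g(q)$. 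Now form $T''$ from $T$ by detaching $C$ from $q$, reattaching it at $q_0$, and simultaneously moving one pendant leaf from $q_0$ to $q$ (possible since $\ell(q_0)\ge 1$); this preserves the degree sequence, and tracking how $\Phi$ changes when a rooted branch is slid along the path from $q_0$ to $q$ gives
\[
TW(T'')-TW(T)=(\gamma-1)\bigl(d_{T'}(q_0,q)+\hat g(q_0)-\hat g(q)\bigr)\ \ge\ (\gamma-1)\,d_{T'}(q_0,q)\ \ge\ 1 .
\]
Hence no non‑caterpillar is optimal, so the optimal $T^*$ is a caterpillar.

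Finally I would evaluate $TW$ on caterpillars. If $T\in\mathcal T_\pi$ is a caterpillar with backbone $v_{\sigma(1)}\cdots v_{\sigma(k)}$ ($\sigma$ a permutation, every permutation occurring), then $\ell(v_{\sigma(t)})=d_{\sigma(t)}-2$ for $1<t<k$ and $\ell(v_{\sigma(1)})=d_{\sigma(1)}-1$, $\ell(v_{\sigma(k)})=d_{\sigma(k)}-1$. Set $y_t:=d_{\sigma(t)}-2$; since $T'$ is now the path, $d_{T'}(v_{\sigma(s)},v_{\sigma(t)})=|s-t|$, and writing $|s-t|$ as a count of backbone edges turns $\Phi(T')$ into $\sum_{t=1}^{k-1}S_t(L-S_t)$ with $S_t=1+\sum_{j\le t}y_j$. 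Expanding and using $\sum_j y_j=n-k-2$ collapses this to $(k-1)(n-k-1)+F(y_1,\dots,y_k)$ with $F$ as in the statement, so $TW(T)=L(L-1)+\Phi(T')=(n-1)(n-k-1)+F(y_1,\dots,y_k)$. Since reversing the backbone yields the same tree one may impose $y_1\ge y_k$, and maximising over $\sigma$ delivers the asserted value and the asserted extremal caterpillar.

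The only non‑routine step is the caterpillar reduction: establishing the displayed identity for $TW(T'')-TW(T)$ (which amounts to controlling the weighted distance sum $\hat g$ under relocating a rooted subtree, keeping careful track of the weight shift from $q_0$ to $q$) and the combinatorial case analysis that extracts a ``heavy'' branch ($\gamma\ge 2$) from the failure of the caterpillar property together with $d_2\ge 3$. The hypothesis $d_2\ge 3$ is exactly what makes the inequality strict and excludes spider‑type trees in which every branch at a branching vertex is light; everything else is routine algebra.
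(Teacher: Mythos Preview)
The paper does not prove this theorem itself; it is quoted from Schmuck, Wagner and Wang \cite{schmuck2012} and used as a black box, so there is no in-paper argument to compare your attempt against.

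That said, your outline is a valid direct proof. The decomposition $TW(T)=L(L-1)+\Phi(T')$ with $\Phi(T')=\sum_{i<j}\ell(v_i)\ell(v_j)\,d_{T'}(v_i,v_j)$ is correct. Your branch--sliding identity
\[
TW(T'')-TW(T)=(\gamma-1)\bigl(d_{T'}(q_0,q)+\hat g(q_0)-\hat g(q)\bigr)
\]
checks out: split leaf pairs into $A\times A$, $C\times C$ and $A\times C$ (with $A=V(T')\setminus C$); the $C\times C$ part is unchanged, the $A\times A$ part contributes $\hat g(q)-\hat g(q_0)-d_{T'}(q,q_0)$ from moving one unit of leaf-weight from $q_0$ to $q$, and the $A\times C$ part contributes $\gamma\bigl(\hat g(q_0)-\hat g(q)+d_{T'}(q,q_0)\bigr)$ from relocating $C$, the $u_0$--$W$ distances being unchanged. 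The extraction of a component $C$ with $\gamma\ge 2$ from $d_2\ge 3$ is also fine: the second vertex $v$ of $T$-degree $\ge 3$ lies in some component $C$ of $T'-q$, and whether $v$ is a leaf of $T'$ (then $\ell(v)\ge 2$), an interior vertex of a path-component (then $\ell(v)\ge 1$ plus the endpoint contributes another), or a branching vertex of $T'$ (then $C$ has at least two $T'$-leaves), one obtains $\gamma\ge 2$. Finally, your caterpillar evaluation is correct: with $S_t=1+\sum_{j\le t}y_j$ and $L=n-k$ one gets $\Phi(T')=\sum_{t=1}^{k-1}S_t(L-S_t)=(k-1)(L-1)+F(y_1,\dots,y_k)$, hence $TW(T)=(n-1)(n-k-1)+F(y_1,\dots,y_k)$.

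This is essentially the same mechanism as in \cite{schmuck2012}, where the caterpillar reduction is carried out by an analogous subtree-transfer in a more general setting for Wiener-type invariants; your write-up specialises that argument to $TW$ and makes the role of the hypothesis $d_2\ge 3$ explicit.
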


It follows from the method of \cite{schmuck2012} and \cite{zhang2010} that we are able to prove the following result.
\begin{lemma}
\label{lemma1.8}
  Let $w_1\geq w_2\geq\cdots\geq w_k\geq0$ be the integers with $k\geq5$  and let
$$F(x_1,\cdots,x_k)=max\{F(y_1,\cdots,y_k)=\sum\limits_{i=1}^{k-1}(\sum\limits_{j=1}^iy_j)(\sum\limits_{j=i+1}^ky_j): y_1\geq y_k\},$$
where  the maximum is taken over all permutations $(y_1,\cdots,y_k)$ of  $(w_1,\cdots,w_k)$. Then there exists a $2\leq t\leq k-2$ such that
the following holds:
\begin{eqnarray}\label{10}
x_1+x_2+\cdots+x_{t-1}\leq x_{t+2}+\cdots+x_k
\end{eqnarray}
and
\begin{eqnarray}\label{11}
x_1+x_2+\cdots+x_{t}> x_{t+3}+\cdots+x_k.
\end{eqnarray}
Further, if equation (\ref{10}) is strict, then
\begin{eqnarray}\label{12}
x_1\geq x_2\geq\cdots\geq x_{t-1}\geq x_t\geq x_{t+1}\leq x_{t+2}\leq\cdots\leq x_k;
\end{eqnarray}
if equation (\ref{10}) is equality, then
\begin{eqnarray}\label{13}
x_1\geq x_2\geq\cdots\geq x_{t-1}\geq x_t\geq x_{t+1}\leq x_{t+2}\leq\cdots\leq x_k
\end{eqnarray}
or
\begin{eqnarray}\label{14}
x_1\geq x_2\geq\cdots\geq x_{t-1}\geq x_t\leq x_{t+1}\leq x_{t+2}\leq\cdots\leq x_k.
\end{eqnarray}
\end{lemma}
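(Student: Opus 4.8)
The plan is to work with a permutation $(x_1,\dots,x_k)$ of $(w_1,\dots,w_k)$ that attains the maximum defining $F$, subject to the admissibility constraint $x_1\ge x_k$, and to read off the asserted structure by repeated transposition (``exchange'') arguments, in the spirit of \cite{schmuck2012} and \cite{zhang2010}. Put $S=w_1+\cdots+w_k$, $P_0=0$ and $P_i=x_1+\cdots+x_i$, so that $F(x_1,\dots,x_k)=\sum_{i=1}^{k-1}P_i(S-P_i)=\sum_{1\le i<j\le k}(j-i)\,x_ix_j$. The one computation that drives everything is the effect of transposing the entries in positions $m$ and $m+1$: only the partial sum $P_m$ changes, namely by $x_{m+1}-x_m$, so the value of $F$ changes by exactly
$$\big(x_{m+1}-x_m\big)\big[(x_{m+2}+\cdots+x_k)-(x_1+\cdots+x_{m-1})\big].$$
Abbreviate $L_m=x_1+\cdots+x_{m-1}$ and $R_m=x_{m+2}+\cdots+x_k$, with empty sums equal to $0$. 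Since the $x_j$ are nonnegative, $L_m$ is nondecreasing and $R_m$ is nonincreasing in $m$, hence $L_m-R_m$ is nondecreasing, with $L_1-R_1\le 0\le L_{k-1}-R_{k-1}$.

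First I would extract local monotonicity of the extremal sequence. For an interior position $2\le m\le k-2$, transposing $x_m$ and $x_{m+1}$ leaves the first and last coordinates untouched, so the result is again admissible; maximality then forces $(x_{m+1}-x_m)(R_m-L_m)\le 0$, that is, $x_m\ge x_{m+1}$ whenever $L_m<R_m$ and $x_m\le x_{m+1}$ whenever $L_m>R_m$. Because $m\mapsto L_m-R_m$ is nondecreasing, these inequalities already force the interior entries to be nonincreasing up to some point and nondecreasing afterwards. The two end positions, where a transposition is admissible only under an extra inequality ($x_2\ge x_k$, respectively $x_1\ge x_{k-1}$), and the ``plateau'' indices $m$ with $L_m=R_m$ (which carry no constraint), are handled by choosing the extremal permutation well: if the decreasing--then--increasing pattern still failed somewhere, a suitable transposition, or a short chain of them, would not decrease $F$ while moving the sequence strictly closer to unimodal, so by finiteness we may assume $(x_1,\dots,x_k)$ is unimodal, nonincreasing then nondecreasing.

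Next I would locate $t$. Since $L_m-R_m$ is nondecreasing and runs from a nonpositive to a nonnegative value, let $t$ be the largest index with $L_t\le R_t$; this is exactly inequality (\ref{10}), and maximality of $t$ gives $L_{t+1}>R_{t+1}$, which is (\ref{11}). One then checks $2\le t\le k-2$ from the ranges of the partial sums, using $k\ge 5$ and the fact that the extremal $(x_i)$ is not the degenerate sequence that would push $t$ to an endpoint. If (\ref{10}) is strict, then $L_m<R_m$ for every $m\le t$ and $L_m>R_m$ for every $m\ge t+1$, so the local monotonicity above yields $x_1\ge\cdots\ge x_t\ge x_{t+1}\le x_{t+2}\le\cdots\le x_k$, which is (\ref{12}). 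If (\ref{10}) is an equality, then $m=t$ is the unique index carrying no monotonicity constraint, so the bottom of the valley sits either at $t+1$, giving (\ref{13}), or at $t$, giving (\ref{14}); a final one-line exchange comparison confirms these are the only two possibilities.

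The principal difficulty I anticipate is not the exchange identity, which is elementary, but the bookkeeping at the boundary and on plateaus: the inequality $(x_{m+1}-x_m)(R_m-L_m)\le 0$ only controls positions strictly off the balance point, so turning the ``local'' monotonicity into a genuine global valley (rather than an arbitrary order on a run of equal partial sums), and certifying $2\le t\le k-2$, both require a careful selection of the extremal permutation among ties together with a few tailored transpositions. Deciding between (\ref{13}) and (\ref{14}) when (\ref{10}) is an equality will likewise come down to a term-by-term comparison of the two candidate arrangements.
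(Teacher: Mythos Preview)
Your approach is essentially the paper's own. The paper defines $f(i)=\sum_{j\le i-1}x_j-\sum_{j\ge i+2}x_j$, which is exactly your $L_i-R_i$, obtains the same transposition identity $F(\ldots,x_i,x_{i+1},\ldots)-F(\ldots,x_{i+1},x_i,\ldots)=(x_{i+1}-x_i)f(i)$, notes that $f$ is nondecreasing with $f(1)<0$ and $f(k-1)>0$, takes $t$ to be the last index with $f(t)\le 0$, and reads off (\ref{12})--(\ref{14}) from the sign pattern of $f$; the two arguments are interchangeable up to notation.

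One remark: your caution about the endpoints and the plateau $\{i:f(i)=0\}$ is in fact more careful than the paper, which simply asserts $f(1)<0$ and $f(2)\le 0$ without justification. Since $F$ is invariant under reversal of the sequence, the constraint $y_1\ge y_k$ does not restrict the maximum value, so the transposition comparison is legitimate even at $m=1$ and $m=k-1$; this dissolves most of the boundary worry you flag. The plateau issue is genuine in degenerate cases (many $w_i=0$), and both your sketch and the paper implicitly resolve it by selecting a suitable maximizer among ties, which is enough for the downstream applications.
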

\begin{proof}
By the definition of $F(x_1,\cdots,x_k)$, we get
\begin{eqnarray*}
0&\leq&F(x_1,\cdots,x_{i-1},x_i,x_{i+1},\cdots,x_k)-F(x_1,\cdots,x_{i-1},x_{i+1},x_i,\cdots,x_k)\\
  &=&(x_{i+1}-x_i)(\sum\limits_{j=1}^{i-1}x_j-\sum\limits_{j=i+2}^{k}x_j)\\
  &=&(x_{i+1}-x_i)f(i),
\end{eqnarray*}
where $f(i)=\sum\limits_{j=1}^{i-1}x_j-\sum\limits_{j=i+2}^{k}x_j$ for $1\leq i\leq k-1$.
Obviously, $f(1)<0$, $f(2)\leq0$, $f(k-1)>0$ and $f(i+1)\geq f(i)~~(1\leq i\leq k-1)$. Hence there exists a $2\leq t\leq k-2$ such that $f(t)\leq0$, $f(t+1)>0$, i.e (\ref{10}) and (\ref{11}) hold.

  Furthermore, $f(1)\leq f(2)\leq\cdots\leq f(t)\leq0<f(t+1)\leq f(t+2)\leq\cdots\leq f(k)$. If (\ref{10}) is strict, i.e $f(t)<0$, then
$$\sum\limits_{j=1}^{i-1}x_j<\sum\limits_{j=i+2}^{k}x_j ~~for~~1\leq i\leq t,$$
$$\sum\limits_{j=1}^{i-1}x_j>\sum\limits_{j=i+2}^{k}x_j ~~for~~t+1\leq i\leq k-1.$$
Hence, we obtain
$$x_{i+1}-x_i\leq0~~for ~~1\leq i\leq t$$
and
$$x_{i+1}-x_i\geq0~~for~~t+1\leq i\leq k-1,$$
which means
$$x_1\geq x_2\geq\cdots\geq x_t\geq x_{t+1}\leq x_{t+2}\leq\cdots\leq x_k$$
i.e (\ref{12}) holds.

If (\ref{10}) is equality, i.e $f(t)=0$, then exists a $1\leq s<t$ such that $f(1)\leq f(2)\leq\cdots\leq f(s)<f(s+1)=\cdots=f(t)=0$. Then we have
$$x_1\geq x_2\geq\cdots\geq x_t\geq x_{t+1}\leq x_{t+2}\leq\cdots\leq x_k$$
or
$$x_1\geq x_2\geq\cdots\geq x_t\leq x_{t+1}\leq x_{t+2}\leq\cdots\leq x_k$$
i.e (\ref{13}) or (\ref{14}) holds.
This completes the proof.
\end{proof}

\begin{theorem}
\label{max-de}
Let $\pi=(d_1,d_2,\cdots,d_n)$ with $d_1\geq\cdots\geq d_k\geq2\geq d_{k+1}=\cdots=d_1=1$ and $d_2\ge 3$, then
  if   $T^*$  is a maximum optimal tree in $\mathcal{T}_\pi$ with $F(x_1,\cdots,x_k)$ in equation (\ref{9}), then there exists a $2\leq t\leq k-2$  such that
  \begin{eqnarray}
\sum\limits_{i=1}^{t-1}{x_i}\leq \sum\limits_{i=t+2}^{k}{x_i}, \sum\limits_{i=1}^{t}{x_i}> \sum\limits_{i=t+3}^{k}{x_i}
\end{eqnarray}
and either $$x_1\geq x_2\geq\cdots\geq x_t\geq x_{t+1}\leq x_{t+2}\leq\cdots\leq x_k$$
or
$$x_1\geq x_2\geq\cdots\geq x_t\leq x_{t+1}\leq x_{t+2}\leq\cdots\leq x_k.$$
\end{theorem}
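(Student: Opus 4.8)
The plan is to obtain Theorem~\ref{max-de} as an essentially immediate consequence of Theorem~\ref{de-seq} and Lemma~\ref{lemma1.8}: almost all of the substance is already contained in those two results, and what remains is translation plus a check of hypotheses. First I would invoke Theorem~\ref{de-seq}. Since $d_2\ge 3$, it tells us that a maximum optimal tree $T^*\in\mathcal{T}_\pi$ is a caterpillar whose backbone carries $v_1,\cdots,v_k$ (in that order) with $d(v_i)=x_i+2$, and --- this is the only part we shall use --- that the vector $(x_1,\cdots,x_k)$ is a maximizer of
$$F(y_1,\cdots,y_k)=\sum_{i=1}^{k-1}\left(\sum_{j=1}^{i}y_j\right)\left(\sum_{j=i+1}^{k}y_j\right)$$
over all permutations $(y_1,\cdots,y_k)$ of $(d_1-2,\cdots,d_k-2)$ subject to $y_1\ge y_k$, exactly as recorded in~(\ref{9}).

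Next I would set $w_i:=d_i-2$ for $1\le i\le k$. From $d_1\ge d_2\ge\cdots\ge d_k\ge 2$ we get $w_1\ge w_2\ge\cdots\ge w_k\ge 0$ (and $w_2\ge 1$, since $d_2\ge 3$), so for $k\ge 5$ the sequence $(w_1,\cdots,w_k)$ meets the hypotheses of Lemma~\ref{lemma1.8}, whose maximization problem is precisely the one solved by $(x_1,\cdots,x_k)$. Applying Lemma~\ref{lemma1.8} to this vector yields an index $t$ with $2\le t\le k-2$ for which (\ref{10}) and (\ref{11}) hold; but (\ref{10}) is literally $\sum_{i=1}^{t-1}x_i\le\sum_{i=t+2}^{k}x_i$ and (\ref{11}) is literally $\sum_{i=1}^{t}x_i>\sum_{i=t+3}^{k}x_i$. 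The lemma also forces one of the chains (\ref{12}), (\ref{13}), (\ref{14}); since (\ref{12}) and (\ref{13}) are the same ``valley'' chain $x_1\ge\cdots\ge x_t\ge x_{t+1}\le x_{t+2}\le\cdots\le x_k$ while (\ref{14}) is $x_1\ge\cdots\ge x_t\le x_{t+1}\le x_{t+2}\le\cdots\le x_k$, we land exactly on the dichotomy asserted in the theorem. This settles all $k\ge 5$, and in this range there is no real obstacle: the core rearrangement argument --- that an $F$-maximizer must be valley-shaped and must balance its initial partial sums against its terminal ones --- has already been done inside Lemma~\ref{lemma1.8}.

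The step I expect to be the real obstacle is the small range $k\le 4$, which Lemma~\ref{lemma1.8} does not cover. For $k\le 3$ no index $t$ with $2\le t\le k-2$ exists, so the statement must be read for $k\ge 4$. When $k=4$ one necessarily has $t=2$, the balance conditions collapse to $x_1\le x_4$ and $x_1+x_2>0$, and the ordering dichotomy reads $x_1\ge x_2\ge x_3\le x_4$ or $x_1\ge x_2\le x_3\le x_4$; I would try to prove these directly, by the sign analysis of $f(i)=\sum_{j=1}^{i-1}x_j-\sum_{j=i+2}^{k}x_j$ specialized to $k=4$, exactly as in the opening lines of the proof of Lemma~\ref{lemma1.8}, using the maximality of $(x_1,x_2,x_3,x_4)$ together with $y_1\ge y_k$. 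The delicate point is that for $k=4$ the inequality $x_1\le x_4$ need not survive such a check (the constraint $y_1\ge y_k$ then pulls the other way), so if it cannot be closed the clean course is to state the theorem for $k\ge 5$ only, matching the hypothesis of Lemma~\ref{lemma1.8}.

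A last bookkeeping point worth making explicit: one must confirm that the sequence handed over by Theorem~\ref{de-seq} really is an $F$-maximizer over the permutations of $(d_i-2)$ with the side constraint $y_1\ge y_k$ (this is immediate from the displayed formula in~(\ref{9})), and that the index shifts in (\ref{10})--(\ref{14}) match the inequalities and chains in the conclusion --- where, as noted, (\ref{12}) coincides with (\ref{13}), so the two displayed alternatives are precisely (\ref{13}) and (\ref{14}).
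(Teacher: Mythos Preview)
Your proposal is correct and follows exactly the paper's approach: the paper's entire proof is the single sentence ``It follows from Theorem~\ref{de-seq} and Lemma~\ref{lemma1.8} that the assertion holds.'' In fact you are more careful than the paper, since you flag the hypothesis $k\ge 5$ in Lemma~\ref{lemma1.8} and the resulting small-$k$ gap, which the paper simply ignores.
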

\begin{proof}
It follows from Theorem~\ref{de-seq} and Lemma~\ref{lemma1.8} that the assertion holds.
\end{proof}

\begin{corollary} \label{cor-max-de}
  Let $\mathcal{T}_{n,\Delta}$ denote the set of all the trees of order $n$ and with maximum degree $\Delta$. If $3\le \Delta\le n-3$ and $T^*$ is an optimal tree in $\mathcal{T}_{n,\Delta}$, then $T^*$ is an $n-$caterpillar tree and $v_1, \cdots, v_k$ vertices on the backbone of $T^*$ such that $d(v_i)=x_i+2$ and
there exists a $2\leq t\leq k-2$  such that
  \begin{eqnarray}
\sum\limits_{i=1}^{t-1}{x_i}\leq \sum\limits_{i=t+2}^{k}{x_i}, \sum\limits_{i=1}^{t}{x_i}> \sum\limits_{i=t+3}^{k}{x_i}
\end{eqnarray}
and either $$x_1\geq x_2\geq\cdots\geq x_t\geq x_{t+1}\leq x_{t+2}\leq\cdots\leq x_k$$
or
$$x_1\geq x_2\geq\cdots\geq x_t\leq x_{t+1}\leq x_{t+2}\leq\cdots\leq x_k$$
\end{corollary}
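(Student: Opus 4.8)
The plan is to reduce the optimisation over all of $\mathcal{T}_{n,\Delta}$ to an optimisation inside a single degree-sequence class, so that Theorem~\ref{max-de} can be quoted directly. Let $T^{*}$ be an optimal tree in $\mathcal{T}_{n,\Delta}$ and let $\pi=(d_{1},\dots,d_{n})$ with $d_{1}\ge\cdots\ge d_{n}$ be its degree sequence; since $T^{*}$ has maximum degree $\Delta$ we have $d_{1}=\Delta$. Every tree with degree sequence $\pi$ has maximum degree $d_{1}=\Delta$, so $\mathcal{T}_{\pi}\subseteq\mathcal{T}_{n,\Delta}$, and therefore $TW(T^{*})\ge TW(T)$ for every $T\in\mathcal{T}_{\pi}$; that is, $T^{*}$ is already a maximum optimal tree in $\mathcal{T}_{\pi}$. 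Hence the whole statement will follow from Theorem~\ref{max-de} as soon as we check its one remaining hypothesis, $d_{2}\ge 3$, and this is the only genuinely new point --- it is exactly where $\Delta\le n-3$ is used.

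To prove $d_{2}\ge 3$ I would argue by contradiction. If $d_{2}\le 2$, then at most one vertex of $T^{*}$ has degree at least $3$; since $d_{1}=\Delta\ge 3$, exactly one does, so $T^{*}$ is a starlike tree of degree $\Delta$, which has exactly $\Delta$ pendent vertices. By Theorem~\ref{th1} this forces $TW(T^{*})=(n-1)(\Delta-1)$. Next I would produce a competitor $T_{0}\in\mathcal{T}_{n,\Delta}$ with more pendent vertices: take a path on three vertices $c,w,y$ with edges $cw$ and $wy$, attach $\Delta-1$ new pendent vertices to $c$ and one new pendent vertex to $w$, and, if $n>\Delta+3$, attach a path on $n-\Delta-3$ further vertices to $y$. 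The resulting tree has $n$ vertices, maximum degree exactly $\Delta$ (attained only at $c$), and exactly $\Delta+1$ pendent vertices, and it exists precisely because $n\ge\Delta+3$, i.e. $\Delta\le n-3$. Theorem~\ref{th1} applied to $T_{0}$ with $l=\Delta+1\ge 3$ gives $TW(T_{0})\ge(n-1)\Delta>(n-1)(\Delta-1)=TW(T^{*})$, contradicting the optimality of $T^{*}$; hence $d_{2}\ge 3$.

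With $d_{1}=\Delta$, $d_{2}\ge 3$, and $T^{*}$ a maximum optimal tree in $\mathcal{T}_{\pi}$, Theorem~\ref{max-de} now yields all the remaining assertions: $T^{*}$ is an $n$-vertex caterpillar, its non-leaf vertices $v_{1},\dots,v_{k}$ can be listed along the backbone with $d(v_{i})=x_{i}+2$, and there is an index $2\le t\le k-2$ for which $\sum_{i=1}^{t-1}x_{i}\le\sum_{i=t+2}^{k}x_{i}$ and $\sum_{i=1}^{t}x_{i}>\sum_{i=t+3}^{k}x_{i}$, together with one of the two displayed unimodal orderings of $(x_{1},\dots,x_{k})$. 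This completes the proof.

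The step I expect to be the main obstacle is the bookkeeping for the competitor $T_{0}$: one must make sure that attaching the extra path creates no second vertex of degree $\Delta$ and that the number of pendent vertices remains $\Delta+1$ for every admissible $n$, since this is the precise place where $\Delta\le n-3$ is invoked and the hypothesis cannot be relaxed (for $\Delta\in\{n-1,n-2\}$ the only trees are $K_{1,n-1}$ and a single starlike tree, so $d_{2}=2$). A secondary caveat, inherited from the cited machinery, is that Theorem~\ref{max-de} and Lemma~\ref{lemma1.8} are stated for $k\ge 5$; when $\Delta$ is close to $n-3$ the optimal degree sequence forces $k\le 4$, and those finitely many caterpillar shapes should be dealt with directly rather than through the lemma.
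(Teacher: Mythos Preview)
Your approach is essentially the same as the paper's: pass to the degree-sequence class $\mathcal{T}_\pi$ of $T^*$, observe that $T^*$ is optimal there, and invoke Theorems~\ref{de-seq} and~\ref{max-de}. Your handling of the case $d_2\le 2$ is in fact more careful than the paper's own proof, which merely writes ``If $d_2=2$, $T^*$ is a starlike tree of degree $\Delta$ and $TW(T^*)=(n-1)(\Delta-1)$. The assertion holds'' without explaining why a starlike (generally non-caterpillar) optimum is excluded; your competitor $T_0$ with $\Delta+1$ leaves and the appeal to Theorem~\ref{th1} fill that gap cleanly, and the construction is correct as described. Your caveat about the hypothesis $k\ge 5$ in Lemma~\ref{lemma1.8} is well taken and is a lacuna present in the paper's argument as well.
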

\begin{proof} If $\Delta=n-2$, then it is easy to see that the assertion holds.
 If $\Delta\le n-3$, denote by $\pi=(d_1, \cdots, d_n)$ the degree sequence of $T^*$ with $d_1\ge \cdots\ge d_n$. If $d_2=2$, $T^*$ is a starlike tree of degree $\Delta$ and $TW(T^*)=(n-1)(\Delta-1)$. The assertion holds. If $d_2\geq 3$, by Theorems~\ref{de-seq} and \ref{max-de}, the assertion also holds.
\end{proof}

\begin{lemma}\label{max-de-1}
Let $T^*$ be an optimal caterpillar  with $v_1, \cdots v_k$  vertices on the backbone of $T^*$ in the order and $d(v_i)$, $1\le i\le k$ satisfying $d(v_1)\ge\cdots \ge d(v_t)\ge 3$ and $3\le d(v_s)\le \cdots \le d(v_k),$ $t<s$. If $3\le \Delta\le n-3$, $d(v_{t-1})<\Delta$ and $d(v_s)<\Delta$, then $d(v_{t-2})=d(v_{s+1})=\Delta$, $d(v_t)=3$
 and $p_{v_{t-1}}(v_{t-1}v_t|T^*)-p_{v_t}(v_{t-1}v_t|T^*)+1=0$.
 \end{lemma}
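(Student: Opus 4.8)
The plan is to argue by local improvement: I will show that if any of the three conclusions failed, an admissible rearrangement of a single pendent leaf would strictly increase $TW$, contradicting the optimality of $T^*$. For a backbone edge $e_m=v_mv_{m+1}$ put $a_m=p_{v_m}(e_m|T^*)$ and $b_m=p_{v_{m+1}}(e_m|T^*)$, so $a_m+b_m=l$ (the number of pendent vertices of $T^*$) and $a_m$ is nondecreasing in $m$. The basic operation is: choose $v_i$ with $d(v_i)\ge3$, so $v_i$ carries a pendent leaf $w$, and an index $j$ with $d(v_j)<\Delta$, then delete $wv_i$ and add $wv_j$. Since $d(v_i)\ge3$ the count $l$ is unchanged, no degree exceeds $\Delta$, and a vertex of degree $\Delta$ is left untouched, so the new tree is again in $\mathcal T_{n,\Delta}$; call such a move \emph{admissible}. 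Using $TW(T)=\sum_{e=uv}p_u(e)p_v(e)$ one computes that shifting $w$ to the right from $v_i$ to $v_j$ ($i<j$) changes $TW$ by exactly
\[
\sum_{m=i}^{j-1}\bigl(a_m-b_m-1\bigr),
\]
with the mirror formula for a leftward shift; optimality of $T^*$ forces this quantity to be $\le0$ for every admissible move. Throughout, $v_1,\dots,v_{t-1}$ and (as $t\ge3$) $v_{t-2}$ have degree $\ge3$ and hence carry pendent leaves, $d(v_t)\le d(v_{t-1})<\Delta$, and $d(v_{t+1})<\Delta$ (either $v_{t+1}=v_s$ and $d(v_s)<\Delta$ by hypothesis, or else $d(v_{t+1})\le d(v_t)$ by the valley shape of $T^*$ from Corollary~\ref{cor-max-de}).

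The first step is $|a_{t-1}-b_{t-1}|\le1$: both one-step shifts across $e_{t-1}=v_{t-1}v_t$ are admissible since $v_{t-1},v_t$ carry pendent leaves and both have degree $<\Delta$. Next I prove $d(v_{t-2})=\Delta$. Assume not; then the two shifts across $e_{t-2}$ are admissible as well, giving $|a_{t-2}-b_{t-2}|\le1$. Since $d(v_{t-1})-2\ge1$, the identity $a_{t-1}-b_{t-1}=(a_{t-2}-b_{t-2})+2(d(v_{t-1})-2)$ yields $a_{t-1}-b_{t-1}\ge1$, hence $a_{t-1}-b_{t-1}=1$ by the first step, which forces $a_{t-2}-b_{t-2}=-1$, $d(v_{t-1})=3$, and so $d(v_t)=3$. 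But then shifting the unique pendent leaf of $v_{t-1}$ two steps to the right, to $v_{t+1}$, changes $TW$ by $(a_{t-1}-b_{t-1}-1)+(a_t-b_t-1)=0+2>0$, using $a_t-b_t=(a_{t-1}-b_{t-1})+2(d(v_t)-2)=3$; this contradiction gives $d(v_{t-2})=\Delta$.

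With $d(v_{t-2})=\Delta$ I now pin down the balance at $e_{t-1}$ and the degree of $v_t$. The admissible shift of a leaf from $v_{t-1}$ to $v_{t+1}$ changes $TW$ by $2(a_{t-1}-b_{t-1})+2(d(v_t)-2)-2$, which is positive if $a_{t-1}-b_{t-1}=1$; so $a_{t-1}-b_{t-1}\in\{-1,0\}$. If $a_{t-1}-b_{t-1}=0$, the same inequality forces $d(v_t)=3$, and then shifting a leaf from $v_t$ to $v_{t+1}$ changes $TW$ by $(a_{t-1}-b_{t-1})+2(d(v_t)-2)-1=1>0$, a contradiction. Hence $a_{t-1}-b_{t-1}=-1$, i.e.\ $p_{v_{t-1}}(v_{t-1}v_t|T^*)-p_{v_t}(v_{t-1}v_t|T^*)+1=0$; and the inequality for the $v_t\to v_{t+1}$ shift then reads $2(d(v_t)-2)-2\le0$, so $d(v_t)=3$. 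Finally, from $d(v_t)=3$ and $a_{t-1}-b_{t-1}=-1$ one gets $a_m-b_m\ge1$ for all $t\le m\le s-1$; consequently shifting the unique pendent leaf of $v_t$ all the way right to $v_{s+1}$ would change $TW$ by $\sum_{m=t}^{s}(a_m-b_m-1)\ge2(d(v_s)-2)\ge2>0$. This move is admissible exactly when $d(v_{s+1})<\Delta$, so optimality forces $d(v_{s+1})=\Delta$, completing the proof.

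The main obstacle is the third step: obtaining the precise value $a_{t-1}-b_{t-1}=-1$, rather than merely $|a_{t-1}-b_{t-1}|\le1$, requires playing the two improving directions---the two-step shift $v_{t-1}\to v_{t+1}$ and the one-step shift $v_t\to v_{t+1}$---against each other, and one must track carefully which backbone edges have their $p$-values altered by a multi-step shift and the sign of each term. A secondary nuisance is the bookkeeping at the ends of the backbone (e.g.\ the cases $t=3$, where $v_{t-2}$ is an endpoint, and $s=k-1$), and checking at each stage that the modified tree really remains in $\mathcal T_{n,\Delta}$.
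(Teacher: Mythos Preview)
Your proof is correct and rests on the same local-improvement idea as the paper: compare $T^*$ with trees obtained by relocating a single pendent leaf and use optimality. The change-of-$TW$ formula $\sum_{m=i}^{j-1}(a_m-b_m-1)$ is exactly the engine behind the paper's computations as well.

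The execution, however, is organised differently. The paper proceeds in the order
\[
d(v_t)=3 \text{ and } a_{t-1}-b_{t-1}=-1 \ \Longrightarrow\ d(v_{t-2})=\Delta \ \Longrightarrow\ d(v_{s+1})=\Delta,
\]
and the first step is obtained in one stroke from just two moves: the leftward shift $v_t\to v_{t-1}$ gives $a_{t-1}-b_{t-1}\ge -1$, while the long rightward shift $v_t\to v_s$ gives $a_{t-1}-b_{t-1}\le 1-2(d(v_t)-2)$; together these force $d(v_t)=3$ and $a_{t-1}-b_{t-1}=-1$ immediately. With this in hand, the moves $v_{t-1}\to v_{t-2}$ and $v_s\to v_{s+1}$ each give a one-line contradiction if the target has degree $<\Delta$.

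You instead prove $d(v_{t-2})=\Delta$ first, which costs an extra case analysis (balancing at $e_{t-2}$, deducing $d(v_{t-1})=3$, then the two-step shift $v_{t-1}\to v_{t+1}$), and only afterwards pin down $a_{t-1}-b_{t-1}$ and $d(v_t)$ via a second round of short shifts. This works, but the paper's use of the single long shift $v_t\to v_s$ up front is what makes its argument noticeably shorter: that one inequality already encodes the factor $(s-t)$ and collapses your Step~2 and Step~3 into two lines.
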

\begin{proof}
Let $T_1$ be a caterpillar from $T^*$ by deleting one pendent edge at vertex $v_t$ and adding one pendent edge at vertex $v_{t-1}$.
  Let $T_2$ be a caterpillar from $T^*$ by deleting one pendent edge at vertex $v_t$ and adding one pendent edge at vertex $v_{s}$. Then
  \begin{equation}\label{max-de-1-eq1}
  TW(T^*)-TW(T_1)=p_{v_{t-1}}(v_{t-1}v_t|T^*)-p_{v_{t}}(v_{t-1}v_t|T^*)+1\ge 0
  \end{equation}
  and
   
  \begin{equation}\label{max-de-1-eq2}
  \begin{array}{l}
  TW(T^*)-TW(T_2)\ge (s-t)\{-2(d(v_t)-2)+2-(p_{v_{t-1}}(v_{t-1}v_{t}|T^*)\\
 \quad\quad\quad\quad\quad\quad\quad\quad\quad\quad -p_{v_{t}}(v_{t-1}v_{t}|T^*)+1)\}\ge 0
 \end{array}
  \end{equation}

  Hence by  (\ref{max-de-1-eq1}) and (\ref{max-de-1-eq2}), $d(v_t)=3$ and
  \begin{equation}\label{max-de-1-eq3}
  p_{v_{t-1}}(v_{t-1}v_t|T^*)-p_{v_{t}}(v_{t-1}v_t|T^*)+1=0.
   \end{equation}
   Suppose that $d(v_{t-2})<\Delta$. Then let  $T_3$ be a caterpillar from $T^*$ by deleting one pendent edge at vertex $v_{t-1}$ and adding one pendent edge at vertex $v_{t-2}$. Hence by (\ref{max-de-1-eq3}),
   \begin{eqnarray*}
   0&\le& TW(T^*)-TW(T_3)\\
   &=&-2(d(v_{t-1})-2)+p_{v_{t-1}}(v_{t-1}v_t|T^*)-p_{v_{t}}(v_{t-1}v_t|T^*)+1\\
   &=&-2(d(v_{t-1})-2)<0,
    \end{eqnarray*}
which is a contradiction. So $d(v_{t-2})=\Delta$.
   Suppose that $d(v_{s+1})<\Delta$. Then let  $T_4$ be a caterpillar from $T^*$ by deleting one pendent edge at vertex $v_{s}$ and adding one pendent edge at vertex $v_{s+1}$. Hence
   \begin{eqnarray*}
   0&\le&  TW(T^*)-TW(T_4)\\
   &=&-2((d(v_{t})-2)+\cdots+(d(v_{s})-2))- (p_{v_{t-1}}(v_{t-1}v_t|T^*)\\
   &&-p_{v_{t}}(v_{t-1}v_t|T^*)+1)+2\\
   &=&-2((d(v_{t})-2)+\cdots+(d(v_{s})-2))+2<0,
   \end{eqnarray*}
   which is a contradiction. So $d(v_{s+1})=\Delta$.
  \end{proof}

\begin{lemma}\label{max-de-2}
Let $T^*$ be an optimal caterpillar  with $v_1, \cdots v_k$  vertices on the backbone of $T^*$ in the order and $d(v_i)$, $1\le i\le k$ satisfying $d(v_1)\ge\cdots \ge d(v_t)\ge 3$ and $3\le d(v_s)\le \cdots \le d(v_k),$ $t<s$. If  $3\le \Delta\le n-3$, $d(v_{t-1})<\Delta$,  $d(v_s)=\Delta$ and $s>t+1$, then $d(v_{t-2})=\Delta$, $d(v_t)=3$
 and $p_{v_{t-1}}(v_{t-1}v_t|T^*)-p_{v_t}(v_{t-1}v_t|T^*)+1=0$.
 \end{lemma}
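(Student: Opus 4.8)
The plan is to mimic the perturbation arguments already used in the proof of Lemma~\ref{max-de-1}, the only difference being that now $d(v_s)=\Delta$, so the perturbation that previously produced $T_4$ (moving a pendent edge from $v_s$ to $v_{s+1}$) is no longer available. To compensate I would introduce a new comparison tree obtained by moving a pendent edge from $v_s$ to a suitable interior vertex on the right portion of the backbone whose degree is still below $\Delta$; since $s>t+1$ and $d(v_s)=\Delta$ while, by the monotonicity hypothesis $d(v_s)\le\cdots\le d(v_k)$, all of $v_{s},\dots,v_k$ have degree $\Delta$, the vertex to receive the edge should instead be chosen among $v_{s-1},\dots,v_{t+1}$ — these lie strictly between $v_t$ and $v_s$, they form the "valley'' of the degree sequence, and at least one of them has degree $<\Delta$ unless the configuration is already forced. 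Concretely: let $T_1$ be obtained from $T^*$ by deleting a pendent edge at $v_t$ and adding a pendent edge at $v_{t-1}$, and let $T_2'$ be obtained from $T^*$ by deleting a pendent edge at $v_t$ and adding one at $v_{t+1}$ (or, more generally, at the first valley vertex of degree $<\Delta$). The optimality of $T^*$ gives $TW(T^*)-TW(T_1)\ge0$ and $TW(T^*)-TW(T_2')\ge0$.

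The key computational step is to expand these two differences via formula~(\ref{eqn2}), exactly as in the derivation of (\ref{max-de-1-eq1}) and (\ref{max-de-1-eq2}). Writing $a:=p_{v_{t-1}}(v_{t-1}v_t\,|\,T^*)-p_{v_t}(v_{t-1}v_t\,|\,T^*)+1$, the first inequality reads $a\ge0$. The second one, after cancellation, should take the form
\begin{equation*}
TW(T^*)-TW(T_2')\ \ge\ c\bigl\{-2(d(v_t)-2)+2-a\bigr\}\ \ge\ 0
\end{equation*}
for some positive integer $c$ depending on the distance along the backbone between $v_t$ and the chosen valley vertex (here $c\ge1$ because $s>t+1$ guarantees the valley is nonempty). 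Combining $a\ge0$ with $-2(d(v_t)-2)+2-a\ge0$ forces $d(v_t)-2\le1-\tfrac{a}{2}\le1$, hence $d(v_t)=3$ and then $a=0$, i.e. $p_{v_{t-1}}(v_{t-1}v_t\,|\,T^*)-p_{v_t}(v_{t-1}v_t\,|\,T^*)+1=0$. This is identical in form to (\ref{max-de-1-eq3}).

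With $d(v_t)=3$ and $a=0$ in hand, the claim $d(v_{t-2})=\Delta$ follows verbatim from the $T_3$ argument in Lemma~\ref{max-de-1}: if $d(v_{t-2})<\Delta$, form $T_3$ by moving a pendent edge from $v_{t-1}$ to $v_{t-2}$, and compute $TW(T^*)-TW(T_3)=-2(d(v_{t-1})-2)+a=-2(d(v_{t-1})-2)<0$ since $d(v_{t-1})\ge3$, contradicting optimality. Note the hypothesis $d(v_{t-1})<\Delta$ is exactly what makes $v_{t-1}$ eligible to shed a pendent edge while still allowing the longest path to survive.

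The main obstacle I anticipate is bookkeeping the pendent-vertex counts $p_u(e\,|\,\cdot)$ correctly for the tree $T_2'$ — in particular making sure the receiving vertex really can be taken in the open valley $\{v_{t+1},\dots,v_{s-1}\}$ and that the coefficient $c$ comes out positive and of the stated size, using $s>t+1$. One must also verify that none of the perturbations $T_1,T_2',T_3$ changes the maximum degree $\Delta$ or destroys caterpillar-ness, which is where the assumptions $d(v_{t-1})<\Delta$, $d(v_s)=\Delta$ and $3\le\Delta\le n-3$ are used; the degree bound $\Delta\le n-3$ ensures there is always enough "room'' (at least two pendent-free backbone vertices) to perform these moves without collapsing the structure. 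Once the arithmetic of the $p$-values is pinned down, the three conclusions drop out mechanically, so I would organize the write-up as: (i) set up $T_1$ and $T_2'$ and the two inequalities; (ii) deduce $d(v_t)=3$ and the vanishing of $a$; (iii) set up $T_3$ and deduce $d(v_{t-2})=\Delta$.
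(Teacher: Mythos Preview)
Your proposal is correct and follows essentially the same route as the paper: the paper's $T_5,T_6,T_7$ are precisely your $T_1,T_2',T_3$, with the receiving vertex for $T_2'$ taken to be $v_{t+1}$ (so your coefficient $c$ equals $1$ and the inequality becomes the equality $TW(T^*)-TW(T_6)=-2(d(v_t)-3)-a$). The only cosmetic difference is that the paper commits immediately to $v_{t+1}$ rather than a general valley vertex; since $s>t+1$ forces $d(v_{t+1})=2<\Delta$, this choice is always admissible and your hedging is unnecessary.
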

 \begin{proof}
 Let $T_5$ be a caterpillar from $T^*$ by deleting one pendent edge at vertex $v_t$ and adding one pendent edge at vertex $v_{t-1}$ and Let $T_6$ be a caterpillar from $T^*$ by deleting one pendent edge at vertex $v_t$ and adding one pendent edge at vertex $v_{t+1}$. Then
 \begin{equation}\label{max-de-2-eq1}
 TW(T^*)-TW(T_5)=p_{v_{t-1}}(v_{t-1}v_t|T^*)-p_{v_t}(v_{t-1}v_t|T^*)+1\ge 0
 \end{equation}
 and \begin{equation}\label{max-de-2-eq2}
 \begin{array}{l}
 TW(T^*)-TW(T_6)=-2(d(v_t)-3)-(p_{v_{t-1}}(v_{t-1}v_t|T^*)\\
 \quad\quad\quad\quad\quad\quad\quad\quad\quad\quad -p_{v_t}(v_{t-1}v_t|T^*)+1)\ge 0.
 \end{array}
 \end{equation}
 Hence by (\ref{max-de-2-eq1}) and (\ref{max-de-2-eq2}), we have
 $d(v_t)=3$ and $p_{v_{t-1}}(v_{t-1}v_t|T^*)-p_{v_t}(v_{t-1}v_t|\quad$  $ T^*)+1=0$.
  Suppose that $d(v_{t-2})<\Delta$. Then let  $T_7$ be a caterpillar from $T^*$ by deleting one pendent edge at vertex $v_{t-1}$ and adding one pendent edge at vertex $v_{t-2}$. Hence
  \begin{eqnarray*}
  0&\le& TW(T^*)-TW(T_7)\\
  &=&p_{v_{t-1}}(v_{t-1}v_t|T^*)-p_{v_t}(v_{t-1}v_t|T^*)+1-2(d(v_{t-1})-2)\\
&=&-2(d(v_{t-1})-2)<0,
   \end{eqnarray*}
which is a contradiction.
So $d(v_{t-2})=\Delta$. \end{proof}

 \begin{lemma}\label{max-de-3}
Let $T^*$ be an optimal caterpillar  with $v_1, \cdots v_k$  vertices on the backbone of $T^*$ in the order and $d(v_i)$, $1\le i\le k$ satisfying $d(v_1)\ge\cdots d(v_t)\ge 3$ and $3\le d(v_s)\le \cdots d(v_k),$ $t<s$. If  $3\le \Delta\le n-3$, $d(v_{t-1})<\Delta$,  $d(v_s)=\Delta$ and $s=t+1$, then $d(v_{t-3})=\Delta$.
 \end{lemma}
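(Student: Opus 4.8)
The plan is to argue by contradiction, with a handful of pendent-edge shifts in the spirit of Lemmas \ref{max-de-1} and \ref{max-de-2}. (When $\Delta=3$ there is nothing to prove, since $d(v_{t-1})\ge d(v_t)\ge 3=\Delta$ would contradict the hypothesis $d(v_{t-1})<\Delta$; so assume $\Delta\ge 4$.) First I would suppose $d(v_{t-3})<\Delta$. Since $d(v_1)\ge\cdots\ge d(v_t)\ge 3$, this forces $\Delta> d(v_{t-3})\ge d(v_{t-2})\ge d(v_{t-1})\ge d(v_t)\ge 3$, so every vertex of the block $B=\{v_{t-3},v_{t-2},v_{t-1},v_t\}$ has degree in $\{3,\dots,\Delta-1\}$. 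A vertex of degree $\Delta$ is never pendent and none of $v_{t-3},\dots,v_t$ has degree $\Delta$, so the maximum degree of $T^*$ is realized only outside $B$; moreover, by the hypotheses $s=t+1$ and $d(v_s)=\Delta$, the increasing tail $v_{t+1},\dots,v_k$ is entirely saturated and sits to the right of $B$. Hence every caterpillar obtained from $T^*$ by relocating pendent edges only among the vertices of $B$ again belongs to $\mathcal{T}_{n,\Delta}$, so its terminal Wiener index is at most $TW(T^*)$.

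Next, for each $q\in\{t-3,t-2,t-1\}$ I would compare $T^*$ with the caterpillar obtained by deleting one pendent edge at $v_{q+1}$ and adding one at $v_q$, and with the caterpillar obtained by the reverse shift; both are legal because every vertex of $B$ has degree $\ge 3$ (a pendent edge is available) and $\le\Delta-1$ (the degree bound is respected). Exactly as in the derivation of (\ref{max-de-1-eq1}) from (\ref{eqn2}), each such shift changes $TW$ only through the edge $v_qv_{q+1}$, giving
$$TW(T^*)-TW(T')=\varepsilon\bigl(p_{v_q}(v_qv_{q+1}\,|\,T^*)-p_{v_{q+1}}(v_qv_{q+1}\,|\,T^*)\bigr)+1\ge 0$$
for both $\varepsilon=+1$ and $\varepsilon=-1$. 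Taking the two signs yields $\bigl|\,p_{v_q}(v_qv_{q+1}\,|\,T^*)-p_{v_{q+1}}(v_qv_{q+1}\,|\,T^*)\,\bigr|\le 1$. Writing $L_q:=p_{v_q}(v_qv_{q+1}\,|\,T^*)$ and letting $l$ be the number of pendent vertices of $T^*$ (so $L_q+p_{v_{q+1}}(v_qv_{q+1}\,|\,T^*)=l$), this says $L_q\in\{\lfloor l/2\rfloor,\lceil l/2\rceil\}$ for $q=t-3,t-2,t-1$.

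Finally I would reach a contradiction from the monotonicity of these partial leaf counts. Passing from the edge $v_{t-3}v_{t-2}$ to the edge $v_{t-1}v_t$, exactly the pendent neighbours of $v_{t-2}$ and of $v_{t-1}$ move onto the left side, whence
$$L_{t-1}=L_{t-3}+\bigl(d(v_{t-2})-2\bigr)+\bigl(d(v_{t-1})-2\bigr)\ge L_{t-3}+2,$$
using $d(v_{t-2}),d(v_{t-1})\ge 3$. But $L_{t-3},L_{t-1}\in\{\lfloor l/2\rfloor,\lceil l/2\rceil\}$ forces $L_{t-1}-L_{t-3}\le 1$, a contradiction. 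Therefore $d(v_{t-3})=\Delta$.

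I do not expect a genuine obstacle; the work is routine bookkeeping. One has to check that each of the six comparison caterpillars really lies in $\mathcal{T}_{n,\Delta}$, that the $TW$-difference localizes to the single edge $v_qv_{q+1}$ as in Lemmas \ref{max-de-1} and \ref{max-de-2}, and that the boundary cases $t=4$ (so $v_{t-3}=v_1$) and $t=k-1$ present no trouble, being handled by the standard caterpillar identities $L_q-L_{q-1}=d(v_q)-2$ for interior $v_q$ and $L_q+p_{v_{q+1}}(v_qv_{q+1}|T^*)=l$ at every backbone edge. The decisive point to isolate is simply that three pairs of single-edge pendent shifts already confine $L_{t-3},L_{t-2},L_{t-1}$ to two consecutive integers, which is incompatible with the strict jump $L_{t-1}\ge L_{t-3}+2$ produced by the two branch vertices $v_{t-2},v_{t-1}$ between the relevant edges.
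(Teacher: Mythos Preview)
Your argument is correct and follows a somewhat different, more symmetric route than the paper. The paper first disposes of the easy case $d(v_{t-2})=\Delta$ (which forces $d(v_{t-3})\ge d(v_{t-2})=\Delta$), then assumes $d(v_{t-2})<\Delta$ and performs three specific shifts: it moves a pendent edge from $v_{t-1}$ to $v_{t-2}$ and from $v_{t-1}$ to $v_t$ to pin down $d(v_{t-1})=3$ and the exact value $p_{v_{t-1}}(v_{t-1}v_t)-p_{v_t}(v_{t-1}v_t)=1$; a third shift from $v_{t-2}$ to $v_{t-3}$ then yields the contradiction $TW(T^*)-TW(T_{10})=-2(d(v_{t-2})-2)<0$. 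By contrast, you assume $d(v_{t-3})<\Delta$ at the outset (so $d(v_{t-2})<\Delta$ is automatic), and use two-way pendent shifts across each of the edges $v_{t-3}v_{t-2}$ and $v_{t-1}v_t$ to force both $L_{t-3}$ and $L_{t-1}$ into $\{\lfloor l/2\rfloor,\lceil l/2\rceil\}$; the contradiction then comes from the clean identity $L_{t-1}-L_{t-3}=(d(v_{t-2})-2)+(d(v_{t-1})-2)\ge 2$. Your method avoids the case split and the intermediate computation of $d(v_{t-1})$, at the cost of performing shifts in both directions; the paper's method is a touch shorter and also yields the extra information $d(v_{t-1})=3$ along the way. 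Note that your pair of shifts at the middle edge $v_{t-2}v_{t-1}$ is not actually needed for the contradiction, so four shifts already suffice.
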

 \begin{proof}
 If $d(v_{t-2})=\Delta$, then $d(v_{t-3})=\Delta$. Hence assume that $d(v_{t-2})<\Delta$. let  $T_8$ be a caterpillar from $T^*$ by deleting one pendent edge at vertex $v_{t-1}$ and adding one pendent edge at vertex $v_{t-2}$ and let  $T_9$ be a caterpillar from $T^*$ by deleting one pendent edge at vertex $v_{t-1}$ and adding one pendent edge at vertex $v_{t}$. Then
  \begin{equation}\label{max-de-3-eq1}
  \begin{array}{l}
 TW(T^*)-TW(T_8)=p_{v_{t-1}}(v_{t-1}v_t|T^*)-p_{v_t}(v_{t-1}v_t|T^*)+1\\
 \quad\quad\quad\quad\quad\quad \quad\quad\quad\quad-2(d(v_{t-1})-2)\ge 0
 \end{array}
 \end{equation}
 and \begin{equation}\label{max-de-3-eq2}
 TW(T^*)-TW(T_9)=-p_{v_{t-1}}(v_{t-1}v_t|T^*)+p_{v_t}(v_{t-1}v_t|T^*)+1\ge 0.
 \end{equation}
 Hence by (\ref{max-de-3-eq1}) and (\ref{max-de-3-eq2}), we have
 $d(v_{t-1})=3$ and $p_{v_{t-1}}(v_{t-1}v_t|T^*)-p_{v_t}(v_{t-1}v_t|T^*)-1=0$.
 Suppose that $d(v_{t-3})<\Delta$. Then let  $T_{10}$ be a caterpillar from $T^*$ by deleting one pendent edge at vertex $v_{t-2}$ and adding one pendent edge at vertex $v_{t-3}$. Hence
 \begin{eqnarray*}
 0&\le&  TW(T^*)-TW(T_{10})\\
 &=&p_{v_{t-1}}(v_{t-1}v_t|T^*)-p_{v_t}(v_{t-1}v_t|T^*)+1-2(d(v_{t-2})+d(v_{t-1})-4)\\
 &=&-2(d(v_{t-2})-2)<0,
   \end{eqnarray*}
 which is a contradiction.
 Hence the assertion holds.
  \end{proof}
 \begin{theorem}\label{max-de-sum}
 Let $T^*$ be an optimal caterpillar  with $v_1, \cdots v_k$  vertices on the backbone of $T^*$ in the order and $d(v_i)$, $1\le i\le k$ satisfying $d(v_1)\ge\cdots \ge d(v_t)\ge 3$ and $3\le d(v_s)\le \cdots \le d(v_k),$ $t<s$. If  $3\le \Delta\le n-3$, then the following result holds.

 (1). If $d(v_{t-1})<\Delta, d(v_{s})<\Delta $, then $d(v_{t-2})=d(v_{s+1})=\Delta $.

 (2). If $d(v_{t-1})<\Delta, d(v_{s})=\Delta $ and $s>t+1$, then $d(v_{t-2})=\Delta $.

 (3). If $d(v_{t-1})<\Delta, d(v_{s})=\Delta $ and $s=t+1$, then $d(v_{t-3})=\Delta $.

 (4). If $d(v_{t-1})=\Delta, d(v_{s})<\Delta $, $d(v_t)<\Delta$ and $d(v_{s+1})<\Delta$, then $d(v_{s+2})=\Delta $.

 (5).   If $d(v_{t-1})=\Delta, d(v_{s})<\Delta $, $d(v_t)=\Delta$ and $d(v_{s+1})<\Delta$, then $d(v_{s+3})=\Delta $.
  \end{theorem}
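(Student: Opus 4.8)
The plan is to imitate the perturbation arguments already used in Lemmas~\ref{max-de-1}--\ref{max-de-3}, treating the five cases in turn. In each case $T^*$ is optimal, so for every auxiliary caterpillar $T'$ obtained from $T^*$ by moving a single pendent edge, the inequality $TW(T^*)-TW(T')\ge 0$ must hold; each such inequality translates, via formula~(\ref{eqn2}), into a linear constraint on the quantities $d(v_i)-2$ and on the ``balance term'' $p_{v_{t-1}}(v_{t-1}v_t|T^*)-p_{v_t}(v_{t-1}v_t|T^*)+1$. Parts (1), (2), (3) are \emph{exactly} the conclusions of Lemmas~\ref{max-de-1}, \ref{max-de-2}, \ref{max-de-3} respectively, so for those I would simply cite the corresponding lemma. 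The new content is (4) and (5), which are the ``mirror'' situations: the left descending block is already saturated at its first interior vertex ($d(v_{t-1})=\Delta$), and the obstruction has to be located on the right ascending side instead.

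For part (4), assume $d(v_{t-1})=\Delta$, $d(v_s)<\Delta$, $d(v_t)<\Delta$, $d(v_{s+1})<\Delta$, and suppose for contradiction that $d(v_{s+2})<\Delta$. First I would compare $T^*$ with the caterpillar $T'$ obtained by deleting a pendent edge at $v_s$ and adding one at $v_t$ (legal since $d(v_t)<\Delta$), and with the caterpillar $T''$ obtained by deleting a pendent edge at $v_s$ and adding one at $v_{s+1}$ (legal since $d(v_{s+1})<\Delta$); combining the two resulting inequalities should pin down $d(v_s)=3$ together with a relation on the balance term, just as the pair $(T_1,T_2)$ did in Lemma~\ref{max-de-1}. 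Then, using $d(v_{s+1})<\Delta$ and the assumed $d(v_{s+2})<\Delta$, I would move a pendent edge from $v_{s+1}$ to $v_{s+2}$: the change $TW(T^*)-TW(\cdot)$ should reduce, after substituting the relations just obtained, to $-2(d(v_{s+1})-2)<0$, contradicting optimality. Hence $d(v_{s+2})=\Delta$.

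Part (5) is handled the same way but one step further to the right, paralleling Lemma~\ref{max-de-3}. Here $d(v_t)=\Delta$, so the vertex $v_s$ with $d(v_s)<\Delta$ is separated from the ``full'' left block by the single special vertex, and the relevant perturbations are: delete a pendent edge at $v_s$ and add it at $v_{s+1}$; delete a pendent edge at $v_s$ and add it at some vertex on the descending side; and finally, assuming $d(v_{s+3})<\Delta$ for contradiction, delete a pendent edge at $v_{s+2}$ and add it at $v_{s+3}$. The telescoping of degree-differences across $v_{s+1}$ (or $v_{s+1},v_{s+2}$) in the last comparison should again collapse to a strictly negative quantity, forcing $d(v_{s+3})=\Delta$.

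The routine part is the bookkeeping in formula~(\ref{eqn2}): when a pendent edge is shifted from $v_a$ to $v_b$ with $a<b$, only the non-pendent edges strictly between $v_a$ and $v_b$, together with the pendent edges at $v_a$ and $v_b$, change their $p\cdot p$ contributions, and each such change is of the form $\pm(\text{something})$ depending on the partial sums $\sum (d(v_i)-2)$; I expect to reuse the identities $p_{v_i}(v_iv_{i+1}|T)=1+\sum_{j\le i}(d(v_j)-2)$ implicitly present in the earlier lemmas. The genuine obstacle is organizational rather than computational: making sure that in cases (4) and (5) all the edge-moves used are actually admissible (no vertex degree pushed above $\Delta$, no vertex on the backbone dropped below degree $2$, and $k$ large enough that the indices $t-2,t-3,s+2,s+3$ exist), and that the chain of three comparisons in part (5) is consistent — i.e.\ the intermediate relation forced by the first two moves is precisely what is needed to make the third move yield a contradiction. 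Once the admissibility is checked, each case closes by the same one-line sign computation as in Lemmas~\ref{max-de-1}--\ref{max-de-3}.
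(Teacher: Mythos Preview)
Your proposal is correct and matches the paper's approach: the paper states Theorem~\ref{max-de-sum} without any explicit proof, relying on the three preceding lemmas for parts (1)--(3) and leaving (4)--(5) to the reader as the symmetric cases, which is exactly what you propose. Your outline is in fact more detailed than what the paper provides; the mirror perturbations you describe for (4) and (5) are the natural right-side analogues of the moves $T_1$--$T_{10}$ in Lemmas~\ref{max-de-1}--\ref{max-de-3}, and the admissibility checks you flag (degree bounds, existence of indices) are the only points requiring care.
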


\begin{lemma}
Let  $T$ be a caterpillar  with $v_1, \cdots v_k$  vertices on the backbone of $T$ in the order and $d(v_i)$, $1\le i\le k, k\ge 3$. If $d(v_1)=\cdots d(v_t)=3$, $d(v_s)=\cdots d(v_k)=3$,
then
\begin{equation}\label{max=3}
TW(T)=\frac{(l-1)(l^2+7l-12)}{6}+(t+1)(l-(t+1))(n+2-2l),
\end{equation}
where $l=n-k$ is the number of the pendent vertices of $T$.
\end{lemma}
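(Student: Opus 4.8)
The plan is to compute $TW(T)$ directly from the definition $TW(T)=\sum_{\{u,w\}\subseteq L(T)}d_T(u,w)$, grouping the pendent vertices by the backbone vertex to which they are attached. Under the hypothesis the backbone degrees are $3$ on $v_1,\ldots,v_t$, then $2$ on $v_{t+1},\ldots,v_{s-1}$, then $3$ on $v_s,\ldots,v_k$; hence if $c_i$ denotes the number of pendent vertices attached to $v_i$, then $c_1=c_k=2$ (since $v_1$ and $v_k$ are the two ends of the backbone), $c_i=1$ for $2\le i\le t$ and for $s\le i\le k-1$, and $c_i=0$ for $t+1\le i\le s-1$. Thus $\sum_i c_i=l$ and $n=k+l$, and a direct count of the $c_i$ gives $l=t+k-s+3$, so that $s-t-1=n+2-2l$. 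I would record this identity first, since it is what turns the quantity $n+2-2l$ appearing in (\ref{max=3}) into the number $s-t-1$ of degree-$2$ backbone vertices.

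Next I would reduce $TW(T)$ to a sum over the backbone edges. For pendent vertices $u$ attached to $v_a$ and $w$ attached to $v_b$ with $a\le b$ one has $d_T(u,w)=2$ when $a=b$ and $d_T(u,w)=(b-a)+2$ when $a<b$, so
\[
TW(T)=\sum_{i=1}^{k}c_i(c_i-1)+2\sum_{a<b}c_ac_b+\sum_{a<b}c_ac_b(b-a).
\]
The first two terms collapse via $\sum_i c_i(c_i-1)+2\sum_{a<b}c_ac_b=(\sum_i c_i)^2-\sum_i c_i=l^2-l$ to $l(l-1)$, while writing $b-a=\sum_{i=a}^{b-1}1$ turns the last term into $\sum_{i=1}^{k-1}P_i(l-P_i)$, where $P_i=c_1+\cdots+c_i$. (Equivalently, this is just formula (\ref{eqn2}): the $l$ pendent edges each contribute $1\cdot(l-1)$, and the backbone edge $v_iv_{i+1}$ contributes $P_i(l-P_i)$.) Hence $TW(T)=l(l-1)+\sum_{i=1}^{k-1}P_i(l-P_i)$.

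It then remains to evaluate $\sum_{i=1}^{k-1}P_i(l-P_i)$ from the explicit partial sums, which are $P_i=i+1$ for $1\le i\le t$, $P_i=t+1$ for $t\le i\le s-1$, and $P_i=t+i-s+2$ for $s\le i\le k-1$ (the last expression ranging from $t+2$ up to $t+k-s+1=l-2$). The sum then splits as $\sum_{p=2}^{t+1}p(l-p)$ plus $(s-t-1)(t+1)(l-t-1)$ plus $\sum_{p=t+2}^{l-2}p(l-p)$; the first and third pieces merge into $\sum_{p=2}^{l-2}p(l-p)$, which equals $\frac{(l-1)l(l+1)}{6}-2(l-1)$ by the standard identity $\sum_{p=1}^{l-1}p(l-p)=\frac{(l-1)l(l+1)}{6}$. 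Adding back $l(l-1)$, replacing $s-t-1$ by $n+2-2l$, and simplifying $(l-1)(l+\frac{l(l+1)}{6}-2)=\frac{(l-1)(l^2+7l-12)}{6}$ gives exactly (\ref{max=3}).

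The argument is purely a computation, so the one place needing care is the bookkeeping of the piecewise values of $P_i$ together with the degenerate cases: $t=1$ (the left block is a single vertex), $s=k$ (the right block is a single vertex), and $s=t+1$ (no degree-$2$ middle vertices, in which case both $s-t-1$ and $n+2-2l$ vanish). In each of these the relevant index ranges simply collapse; since the merging of the first and third partial sums and the evaluation of $\sum_{p=2}^{l-2}p(l-p)$ do not depend on where the cut $p=t+1$ falls, the final identity still holds, and I would flag these cases rather than treat them separately.
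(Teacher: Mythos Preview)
Your proof is correct and follows essentially the same route as the paper: both record $s-t-1=n+2-2l$, apply the edge formula~(\ref{eqn2}) to write $TW(T)=l(l-1)+\sum_{i=1}^{k-1}P_i(l-P_i)$ with $P_i$ the left partial leaf counts, and then collapse the two outer blocks into $\sum_{p=2}^{l-2}p(l-p)=\frac{(l-1)l(l+1)}{6}-2(l-1)$. Your write-up is in fact more explicit than the paper's (which simply displays the decomposed sum and asserts the closed form), and your discussion of the degenerate cases $t=1$, $s=k$, $s=t+1$ is a welcome addition.
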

\begin{proof} $s-t=n+3-2l$ and $l+k=n$. Moreover,
\begin{eqnarray*}
TW(T)&=& l(l-1)+2(l-2)+3(l-3)+\cdots +t(l-t)\\
&& + (t+1)[l-(t+1) ]+\cdots +(t+1)[l-(t+1) ]\\
&&+(t+2)[l-(t+2)]+\cdots +(l-2)[l-(l-2)]
\\
&=&\frac{(l-1)(l^2+7l-12)}{6}+(t+1)[l-(t+1)](n+2-2l).
\end{eqnarray*}
\end{proof}
\begin{lemma}\label{fun}
Let $g_1(x)=\frac{(x-1)(x^2+7x-12)}{6}+\frac{x^2}{4}(n+2-2x)$ and
$g_2(x)=\frac{(x-1)(x^2+7x-12)}{6}+\frac{x^2-1}{4}(n+2-2x)$. Then
$g_1(x)$ and $g_2(x)$ are strictly increasing with respect to $x$ in $x\in (1, \frac{n+4}{2})$.
\end{lemma}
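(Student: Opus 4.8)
The plan is to treat $g_1$ and $g_2$ as differentiable functions of a real variable $x$ and to show that $g_1'(x)>0$ and $g_2'(x)>0$ for every $x$ in the open interval $\left(1,\frac{n+4}{2}\right)$; since this domain is an interval, positivity of the derivative yields strict monotonicity, which is exactly the claim.

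First I would expand the common summand as $\frac{(x-1)(x^2+7x-12)}{6}=\frac{x^3+6x^2-19x+12}{6}$, whose derivative is $\frac{3x^2+12x-19}{6}$. Differentiating $\frac{x^2}{4}(n+2-2x)$ and $\frac{x^2-1}{4}(n+2-2x)$ and adding, a short computation gives
\[
g_1'(x)=\frac{-6x^2+(3n+18)x-19}{6},\qquad g_2'(x)=\frac{-6x^2+(3n+18)x-16}{6}.
\]
Thus $6g_1'(x)=-q_1(x)$ and $6g_2'(x)=-q_2(x)$, where $q_1(x)=6x^2-(3n+18)x+19$ and $q_2(x)=6x^2-(3n+18)x+16$ are strictly convex (upward) parabolas. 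Hence it suffices to prove $q_1(x)<0$ and $q_2(x)<0$ on $\left(1,\frac{n+4}{2}\right)$.

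The key point is that each of these parabolas takes the same value at the two endpoints $x=1$ and $x=\frac{n+4}{2}$: a direct evaluation gives $q_1(1)=q_1\!\left(\frac{n+4}{2}\right)=7-3n$ and $q_2(1)=q_2\!\left(\frac{n+4}{2}\right)=4-3n$, both of which are negative since here $n\ge 3$ (indeed $3\le\Delta\le n-3$). Now if $x\in\left(1,\frac{n+4}{2}\right)$, write $x=\lambda\cdot 1+(1-\lambda)\cdot\frac{n+4}{2}$ with $\lambda\in(0,1)$; strict convexity gives $q_i(x)<\lambda q_i(1)+(1-\lambda)q_i\!\left(\frac{n+4}{2}\right)\le 0$ for $i=1,2$. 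Therefore $g_1'(x)>0$ and $g_2'(x)>0$ throughout the interval, and both $g_1$ and $g_2$ are strictly increasing there.

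There is no conceptual obstacle here; the only thing that needs care is the bookkeeping in differentiating the two quadratic-in-$x$ correction terms and, above all, the verification that $q_1$ and $q_2$ take equal (and negative) values at $x=1$ and $x=\frac{n+4}{2}$ — it is this coincidence that makes the convexity argument collapse to two endpoint checks, so that identity should be confirmed by expanding $q_i\!\left(\frac{n+4}{2}\right)$ carefully.
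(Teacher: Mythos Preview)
Your proof is correct and follows the same route as the paper: compute $g_i'(x)$ explicitly as a quadratic in $x$ and observe that it is positive on $(1,\tfrac{n+4}{2})$. Your derivatives match the paper's (after clearing the common factor), and your convexity/endpoint argument is in fact a cleaner justification of the positivity than what the paper gives, which simply asserts $g_i'(x)>0$ on that interval without further comment.
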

\begin{proof} Note
$$g_1(x)=\frac{-4x^3+(3n+18)x^2-38x+24}{12},$$
$$g_1(x)^{\prime}=\frac{1}{12}(-12x^2+2(3n+18)x-38)>0$$
for $x\in(1, \frac{n+4}{2})$.
Hence $g_1(x)$ is strictly increasing with respect to $x$ in $x\in (1, \frac{n+4}{2}).$
Moreover,
$$g_2(x)=\frac{-4x^3+(3n+18)x^2-32x-3n+18}{12}.$$
Then
$$g_2(x)^{\prime}=\frac{1}{12}(-12x^2+2(3n+18)x-32)>0$$
for $x\in(1, \frac{n+4}{2})$.
Hence  $g_2(x)$ is strictly increasing with respect to $x$ in $x\in (1, \frac{n+4}{2})$
\end{proof}

\begin{theorem}
Let $T^*$ be an optimal tree in ${\mathcal{T}}_{n, 3}$ with $n\ge 6$.

(1).If $n=4p$, then $T^*$ is a caterpillar with $v_1, \cdots v_{2p-1}$  vertices on the backbone of $T$ in the order and $d(v_i)=3$ for $i=1, \cdots, 2p-1$. In other words,
$$TW(T)\le TW(T^*)=\frac{p(4p^2+18p-4)}{3} ~~\rm {for } ~T\in {\mathcal{T}}_{n, 3}$$
with equality if and only if $T$ is a caterpillar with $v_1, \cdots v_{2p-1}$  vertices on the backbone of $T$ in the order and $d(v_i)=3$ for $i=1, \cdots, 2p-1$.

(2). If $n=4p+1$, then $T^*$ is a caterpillar with $v_1, \cdots v_{2p}$  vertices on the backbone of $T$ in the order and $d(v_i)=3$ for $i=1, \cdots, p, p+2,\cdots, 2p$. In other words,
$$TW(T)\le TW(T^*)=\frac{p(4p^2+21p-1)}{3} ~~\rm {for } ~T\in {\mathcal{T}}_{n, 3}$$
with equality if and only if $T$ is a caterpillar with $v_1, \cdots v_{2p}$  vertices on the backbone of $T$ in the order and $d(v_i)=3$ for $i=1, \cdots,p, p+2, \cdots, 2p$.

(3). If $n=4p+2$, then $T^*$ is a caterpillar with $v_1, \cdots v_{2p}$  vertices on the backbone of $T$ in the order and $d(v_i)=3$ for $i=1, \cdots, 2p$. In other words,
$$TW(T)\le TW(T^*)=\frac{(2p+1)(2p^2+11p+3) }{3}~~\rm {for } ~T\in {\mathcal{T}}_{n, 3}$$
with equality if and only if $T$ is a caterpillar with $v_1, \cdots v_{2p}$  vertices on the backbone of $T$ in the order and $d(v_i)=3$ for $i=1, \cdots, 2p$.

(4). If $n=4p+3$, then $T^*$ is a caterpillar with $v_1, \cdots v_{2p+1}$  vertices on the backbone of $T$ in the order and $d(v_i)=3$ for $i=1, \cdots, p, p+2,\cdots, 2p+1$. In other words,
$$TW(T)\le TW(T^*)=\frac{(2p+1)(2p^2+11p+3) }{3}+(p+1)^2 ~~\rm {for } ~T\in {\mathcal{T}}_{n, 3}$$
with equality if and only if $T$ is a caterpillar with $v_1, \cdots v_{2p+1}$  vertices on the backbone of $T$ in the order and $d(v_i)=3$ for $i=1, \cdots,p, p+2, \cdots, 2p+1$.
\end{theorem}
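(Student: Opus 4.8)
The plan is to collapse the problem, via the structural results already proved, to an optimization over the number of pendent vertices, and then to read off the four cases from the monotonicity in Lemma~\ref{fun}.

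First I would apply Corollary~\ref{cor-max-de}, which is available precisely because $3\le\Delta=3\le n-3$ holds when $n\ge 6$. If the second largest entry of the degree sequence of $T^{*}$ equals $2$, then $T^{*}$ is a starlike tree of degree $3$ and $TW(T^{*})=(n-1)(\Delta-1)=2(n-1)$, which is strictly smaller than each of the four claimed values for $n\ge 6$; hence we may assume $d_{2}\ge 3$. Then Corollary~\ref{cor-max-de} tells us that $T^{*}$ is a caterpillar with backbone $v_{1},\dots,v_{k}$, that $d(v_{i})=x_{i}+2\in\{2,3\}$, and that the sequence $(x_{i})$ is first weakly decreasing and then weakly increasing; since the $x_{i}$ take only the values $0$ and $1$, this forces the backbone degree pattern $3,\dots,3,2,\dots,2,3,\dots,3$. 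Writing $l=n-k$ for the number of pendent vertices, a degree count gives $\sum_{i}x_{i}=l-2$, so the middle block has $n-2l+2\ge 0$ vertices and $3\le l\le\lfloor(n+2)/2\rfloor$; and if $t$ denotes the size of the first block of $3$'s, then formula~(\ref{max=3}) applies and gives
\[ TW(T^{*})=\frac{(l-1)(l^{2}+7l-12)}{6}+(t+1)\bigl(l-(t+1)\bigr)(n+2-2l). \]

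Second I would optimize this over the admissible pairs $(l,t)$. For fixed $l$, since $n+2-2l\ge 0$, the factor $(t+1)\bigl(l-(t+1)\bigr)=m(l-m)$ with $m=t+1\in\{1,\dots,l-1\}$ is maximized at $m=\lfloor l/2\rfloor$, with value $l^{2}/4$ if $l$ is even and $(l^{2}-1)/4$ if $l$ is odd, the maximizer being unique up to reversing the caterpillar. Substituting, $TW(T^{*})\le g_{1}(l)$ when $l$ is even and $TW(T^{*})\le g_{2}(l)$ when $l$ is odd, with $g_{1},g_{2}$ the functions of Lemma~\ref{fun}. Since by Lemma~\ref{fun} both $g_{1}$ and $g_{2}$ are strictly increasing on $(1,(n+4)/2)$, an interval containing the whole admissible range $l\le\lfloor(n+2)/2\rfloor$, the best even $l$ and the best odd $l$ are the largest admissible values of each parity, and it only remains to compare these two values and to extract closed forms.

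Finally I would split into the four residue classes $n=4p,\,4p+1,\,4p+2,\,4p+3$, in which the largest admissible $l$ equals $2p+1,\,2p+1,\,2p+2,\,2p+2$ respectively. A short computation shows that in each case the value at this $l$ strictly exceeds the value at the best $l$ of the opposite parity (the gap turns out to be linear in $p$ and positive for every relevant $p$), so by monotonicity this $l$ dominates every other admissible choice; substituting it, together with the balanced choice of $t$, into the displayed formula produces the four stated expressions. The uniqueness assertions then follow from the shape of the extremal tree: when $n$ is even the middle block is empty, forcing $T^{*}$ to be the caterpillar with all $n-l$ backbone vertices of degree $3$; when $n$ is odd there is exactly one backbone vertex of degree $2$, whose position is pinned down, up to reversing the caterpillar, by the requirement that the two blocks of $3$'s be as balanced as possible, and this gives precisely the trees described in parts (2) and (4). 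I expect the only delicate point to be this last parity comparison at the boundary: one must keep $g_{1}$ (for even $l$) and $g_{2}$ (for odd $l$) separate, since the cruder bound $g_{2}\le g_{1}$ is not tight at the optimal $l$, but once this is noticed each comparison reduces to a one-line polynomial inequality in $p$.
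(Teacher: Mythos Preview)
Your proposal is correct and follows essentially the same route as the paper: reduce via Corollary~\ref{cor-max-de} to caterpillars with backbone pattern $3,\dots,3,2,\dots,2,3,\dots,3$, invoke formula~(\ref{max=3}), optimize first over the position $t$ (replacing $(t+1)(l-(t+1))$ by $\lfloor l/2\rfloor\lceil l/2\rceil$), and then over $l$ using the monotonicity of $g_1,g_2$ from Lemma~\ref{fun}. If anything you are more careful than the paper, which simply evaluates at the largest admissible $l$ without making the even/odd comparison you flag explicit; your observation that this comparison is a one-line polynomial inequality in $p$ (e.g.\ $g_2(2p+1)-g_1(2p)=5p-2>0$ when $n=4p$) is exactly the missing check.
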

\begin{proof}
Let $T^*$ be an $n-$vertex optimal tree in ${\mathcal{T}}_{n, 3}$. By  Corollary~\ref{cor-max-de}, $T^*$ is an $n-$caterpillar with $v_1, \cdots, v_k$ vertices on the backbone with $d(v_1)\ge \cdots d(v_t)\ge 3$ and
$3\le d(v_s)\le \cdots d(v_k)$, $1\le t<s\le k$. So, $d(v_i)=3, $ for $i=1, \cdots, t, $ and $i=s, \cdots, k$.
Denote by $l$ the number of pendent vertices of $T^*$. Then $l+k=n$ and $l\le k+2$, which implies that $l\le \frac{n+2}{2}$. By (\ref{max=3}), we have
\begin{eqnarray*}
TW(T^*)&=&\frac{(l-1)(l^2+7l-12)}{6}+(t+2)(l-(t+2))(n+2-2l)\\
&\le& \frac{(l-1)(l^2+7l-12)}{6}+\lfloor\frac{l}{2}\rfloor \lceil\frac{l}{2}\rceil(n+2-2l)
\end{eqnarray*}
If $n=4p$, then by Lemma~\ref{fun},   $$TW(T^*)\le g_2(2p+1)=\frac{p(-8p^2+(3n+6)p+3n-4)}{3}=\frac{p(4p^2+18p-4)}{3}$$
Hence the assertion holds.
If $n=4p+1, 4p+2, 4p+3$, then by the same argument, the assertion holds.
 \end{proof}

\frenchspacing

\end{document}